\documentclass[11pt,a4paper,oneside,reqno]{amsart}

\usepackage{changepage}
\usepackage[utf8]{inputenc}
\usepackage{graphicx}
\usepackage{amsmath,amsfonts,amsthm,amssymb,dsfont}
\usepackage{mathrsfs,mathtools,mathabx}
\usepackage{verbatim}
\usepackage{enumerate}
\usepackage{enumitem}
\usepackage{xcolor}
\usepackage{cite}
\usepackage{array}
\usepackage[T1]{fontenc}
\usepackage{tikz}
\usepackage{subcaption}

\usepackage[
    colorlinks,
    citecolor=blue,
    linkcolor=red!60!black,
    pagebackref=true
]{hyperref}
\usepackage[nameinlink]{cleveref}

\crefname{ineq}{inequality}{inequalities}
\crefname{interv}{intervals}{intervals}
\creflabelformat{ineq}{#2{\upshape(#1)}#3}
\creflabelformat{interv}{#2{\upshape(#1)}#3}

\newtheorem{theorem}{Theorem}

\newtheorem{lemma}[theorem]{Lemma}
\newtheorem{proposition}[theorem]{Proposition}
\newtheorem{corollary}[theorem]{Corollary}

\newtheorem{remark}[theorem]{Remark}

\newcommand{\A}{\mathcal{A}}

\newcommand{\bigO}{\mathcal{O}}
\newcommand{\smallo}{o}
\newcommand{\Z}{\mathbb{Z}}
\newcommand{\N}{\mathbb{N}}
\newcommand{\Q}{\mathbb{Q}}
\newcommand{\R}{\mathbb{R}}

\newcommand{\T}{\mathbb{T}}

\newcommand{\Leb}{\mathrm{Leb}}

\title{Coboundaries of 3-{IET}s}

\author[Berk]{Przemysław Berk}
\address{Faculty of Mathematics and Computer Science, Nicolaus Copernicus University, ul. Chopina 12/18, 87-100 Toruń}
\email{zimowy@mat.umk.pl}

\author[Ospina]{Carlos Ospina}
\address{School of Mathematical Sciences, Tel Aviv University, Tel Aviv, 69978, Israel}
\email{ospina.math@icloud.com}

\date{}

\begin{document}

\begin{abstract}
\textcolor{black}{In this note, we investigate coboundaries of interval exchange transformations of three intervals, or 3-{IET}s. More precisely, we study differentiable functions whose derivative is absolutely continuous, and whose integral is zero and whose derivative also has integral zero. We show that these functions are coboundaries for a typical 3-{IET} if and only if their values at the endpoints of the domain are zero. We also show the existence of exceptional counterexamples for both possible endpoint behaviors. Our results are obtained by studying the properties of associated skew products.}
\end{abstract}

\maketitle

\section{Introduction}
One of the classical problems in dynamical systems is to determine whether, 
for a given measure-preserving transformation $T$ on a standard
\textcolor{black}{Borel probability} space $(X,\mu)$, 
a given zero-mean function $f\in L^1(X,\mu)$ is a coboundary. 
That is, one asks whether there exists $g\in L^1(X,\mu)$, 
possibly satisfying additional regularity conditions, such that the cohomological equation 
\begin{equation}\label{eq: cohom}
f(x)=g\circ T(x)-g(x)
\end{equation}
holds for $\mu$-almost every $x$.

This problem has been studied from many points of view and for many classes of dynamical systems. In particular, interval exchange transformations were studied in \cite{MarmiMoussaYoccoz} and \cite{BerkTrujillo}, while \cite{ForniFlows} studied the analogous problem for flows on compact surfaces. The main application of this paper is  to use the cohomological equation to determine whether a skew product $T_f$ on $X\times \R$, given by
\[
T_f(x,r):= (Tx,r+f(x))
\]
is ergodic with respect to the product measure $\mu \otimes \Leb_{\R}$. 
\textcolor{black}{%
It is a classical fact that if $f$ is a coboundary with $g$ as in \Cref{eq: cohom}, then, for $\mu$-almost every $x$ and every $r\in\R$,
 $(x,r)$ and its trajectory under $T_f$ is contained in the vertically shifted graph
\[
G_{r,x}:=\{(y,g(y)+r-g(x)):y\in X\}.
\]
For fixed $-\infty<a<b<\infty$, the set
\[
G_{[a,b],x}:=\bigcup_{s\in[a,b]}G_{s,x}
\]
has positive measure, and neither $G_{[a,b],x}$ nor its complement has full $\mu\otimes\Leb_\R$-measure.
}
Moreover, $G_{[a,b]}$ is $T_f$-invariant. Consequently, $T_f$ is not ergodic. Conversely, if one can prove that $T_f$ is ergodic, then $f$ is not a coboundary. 

We study this problem for the following setting: $X = [0,1)$ with Lebesgue measure $\Leb$, $T$ is a symmetric interval exchange transformation of $3$ intervals (see  \Cref{sec:iets} for definitions) and $f\in C^{1+AC}([0,1))$, where $C^{1+AC}(\cdot)$ denotes the set of differentiable functions whose derivative is absolutely continuous. We consider two cases, depending on the values of $f$ at the endpoints of the interval. First, we assume that $f$ vanishes at the endpoints of the interval $[0,1)$.

\begin{theorem}\label{thm:main1}
\textcolor{black}{Let $f\in C^{1+AC}([0,1))$ satisfy
    \[
        f(0)=\lim_{x\to 1}f(x)=0
        \qquad\text{and}\qquad
        \int f=0.
    \]
}
Then $f$ is a coboundary for almost every symmetric $3$-IET $T$. In particular, the skew product $T_f$ is not ergodic.

Assume in addition that $f'$ is not identically zero and that
\[
f'(0)=\lim_{x\to 1}f'(x)\neq 0.
\]
Then, among the $3$-IETs $T$ whose discontinuities are not zeros of \textcolor{black}{$f'$}, there is a dense subset for which the skew product $T_f$ is ergodic.
\end{theorem}

In contrast, we have the following result when \(f\) has equal nonzero endpoint values.

\begin{theorem}\label{thm:main2}
    Let \textcolor{black}{$f\in C^{1+AC}([0,1))$ satisfy
\[
\int f=0
\qquad\text{and}\qquad
f(0)=\lim_{x\to 1}f(x)\neq 0.
\]}
Then the skew product $T_f$ is ergodic for almost every symmetric $3$-IET $T$. In particular, $f$ is not a coboundary for $T$.
\end{theorem}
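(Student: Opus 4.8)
The plan is to show that the $\R$-valued cocycle $f$ over $(T,\Leb)$ has \emph{full group of essential values} for a.e.\ symmetric $3$-IET $T$; since a.e.\ symmetric $3$-IET is uniquely ergodic, this is equivalent to ergodicity of $T_f$ by the standard theory of cocycles over ergodic transformations (Schmidt). Thus it suffices to produce, for a.e.\ $T$, a sequence of nonzero reals $e_k\to 0$ each of which is an essential value of $f$: the group of essential values is then a closed subgroup of $\R$ containing arbitrarily small nonzero elements, hence all of $\R$. A useful preliminary reduction is that, because $\int f' = \lim_{x\to1}f(x)-f(0)=0$ in both theorems, the hypotheses split along the single linear functional $f\mapsto f(0)$. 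Fixing once and for all a model $f_\ast\in C^{1+AC}([0,1))$ with $\int f_\ast=0$ and $f_\ast(0)=\lim_{x\to1}f_\ast(x)\neq 0$ (e.g.\ $f_\ast(x)=\cos 2\pi x$), any $f$ as in \Cref{thm:main2} decomposes as $f=\lambda f_\ast+f_0$ with $\lambda=f(0)/f_\ast(0)\neq 0$ and $f_0=f-\lambda f_\ast$ satisfying the hypotheses of \Cref{thm:main1}. By \Cref{thm:main1}, $f_0$ is a coboundary for a.e.\ $T$, so $T_f$ is measurably isomorphic to $T_{\lambda f_\ast}$, which after rescaling the $\R$-coordinate is isomorphic to $T_{f_\ast}$. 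Hence \Cref{thm:main2} is equivalent to the single assertion that $T_{f_\ast}$ is ergodic for a.e.\ symmetric $3$-IET, and the only feature of $f$ that the renormalization argument must detect is the non-vanishing of $f(0)$.

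To analyze $T_{f_\ast}$ (equivalently $T_f$) I would run Rauzy--Veech induction, which on the symmetric class of three intervals keeps the permutation fixed and acts ergodically on the (projectivized) length parameter with respect to an absolutely continuous invariant measure. Inducing $T$ on the nested Rauzy--Veech subintervals $I^{(n)}\downarrow$ produces symmetric $3$-IETs $T^{(n)}=T|_{I^{(n)}}$ and, on $I^{(n)}\times\R$, the induced skew product $(T^{(n)})_{f^{(n)}}$, where $f^{(n)}$ is the Birkhoff sum of $f$ over the piecewise constant first-return time; inducing on a Rokhlin tower with integrable return time gives a system whose ergodicity is equivalent to that of $T_f$, so it suffices to understand these. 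The $C^{1+AC}$ hypothesis enters through two Denjoy--Koksma estimates on the three subintervals $I^{(n)}_j$: first, $(f^{(n)})'=S_{r^{(n)}}f'$ is a sum of copies of $f'$ over pairwise disjoint translated intervals, so $\mathrm{Var}_{I^{(n)}_j}\big((f^{(n)})'\big)\le\mathrm{Var}(f')$ uniformly in $n$; second, since $\int f'=0$, the total increment $\big|\int_{I^{(n)}_j}(f^{(n)})'\big|$ of $f^{(n)}$ across $I^{(n)}_j$ is $\le\mathrm{Var}(f')$ as well. Together these show that on each $I^{(n)}_j$ the cocycle $f^{(n)}$ is within $O(|I^{(n)}_j|)\to 0$ in the uniform norm of an affine function. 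Consequently $T_f$ is ergodic if and only if the skew products over $T^{(n)}$ of the corresponding \emph{piecewise affine} cocycles $\widetilde f^{(n)}$ are, and one is reduced to tracking a finite package of data along the renormalization: the three means $c^{(n)}_j=\tfrac{1}{|I^{(n)}_j|}\int_{I^{(n)}_j}f^{(n)}$ and the jumps of $f^{(n)}$ at the two interior subdivision points and at the identified endpoints of $I^{(n)}$ (one checks, by unfolding the tower, that $\int f=\int f'=0$ forces the three rises and the three jumps around the circle $I^{(n)}/\!\sim$ each to sum to $0$).

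The heart of the matter is to show that when $f(0)\neq 0$ this renormalized data does not degenerate to a "coboundary configuration". Concretely, the jumps of $f^{(n)}$ at the subdivision points are differences of Birkhoff sums of $f$ along two orbit arcs that branch at a discontinuity of a power of $T$; telescoping and passing to the limit along a subsequence of renormalization times at which a prescribed admissible combinatorial word recurs (such times exist for a.e.\ $T$ by ergodicity of Rauzy--Veech induction), these jumps are shown to converge to explicit quantities built from the values of $f$ at the singular orbits of $T$ and $T^{-1}$, which for the symmetric permutation collapse---using continuity of $f$ on $[0,1)$ away from the endpoints---onto the common boundary value $c=f(0)=\lim_{x\to1}f(x)$. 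Choosing the combinatorial word so that a short sub-tower of $I^{(n)}$ has return time comparable to $1/|I^{(n)}|$, the Birkhoff sum of $f$ over a well-chosen sub-interval of $I^{(n)}$ then differs from any preassigned nonzero multiple of $c$ by an error tending to $0$, and a standard Rokhlin-tower argument promotes these approximate invariances to genuine essential values $e_k\to 0$ of $f$; if instead $c=0$ one is in the setting of \Cref{thm:main1}. Hence $E(f)=\R$, so $T_f$ is ergodic and, in particular, $f$ is not a coboundary for $T$. The main obstacle is precisely this last stage---propagating the single scalar $f(0)\neq 0$ through the renormalization with enough quantitative control (uniform distortion bounds for Rauzy--Veech induction on symmetric $3$-IETs, control of the means $c^{(n)}_j$, and the passage from "$f^{(n)}$ is $o(1)$-close to a non-degenerate piecewise affine cocycle" to actual essential values)---whereas the reductions in the first two steps are comparatively soft.
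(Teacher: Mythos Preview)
Your opening reduction---writing $f=\lambda f_\ast+f_0$ with $f_0(0)=\lim_{x\to1}f_0(x)=0$ and invoking \Cref{thm:main1} to conclude that $f_0$ is a coboundary for a.e.\ $T$, so that $T_f$ is measurably conjugate to $T_{\lambda f_\ast}$---is correct and is a genuine simplification the paper does not make; it cleanly isolates the single scalar $f(0)$ as the only obstruction.

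Beyond that, however, the strategy diverges from the paper and leaves the main difficulty unresolved. A small correction first: Rauzy--Veech induction does not fix the symmetric permutation $(A\,B\,C\,/\,C\,B\,A)$; one step sends it to $(A\,B\,C\,/\,C\,A\,B)$ or $(A\,C\,B\,/\,C\,B\,A)$, and the Rauzy class has three vertices. More to the point, the paper avoids iterated $3$-IET renormalization entirely. It observes that a \emph{single} induction step (Case~1 or Case~2 in \Cref{eq: case1}--\Cref{eq: case2}) already yields a \emph{rotation} $R_\alpha$ on a subinterval $J$, because for the symmetric permutation the induced $3$-IET has a fake discontinuity and collapses to a $2$-IET. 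The induced cocycle $\widetilde f$ is then piecewise $C^{1+BV}$ on the two intervals exchanged by $R_\alpha$, with one additional interior discontinuity at a point $\beta\in J$ whose jump equals (up to sign and normalization) $\lim_{x\to1}f(x)=f(0)\neq0$. Everything that follows is classical continued-fraction analysis of a rotation, with the extra parameter $\beta$ treated as a generic point.

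The step you flag as ``the main obstacle'' is indeed where the real work lies, and the paper's mechanism is quite different from what you sketch. It does not produce essential values $e_k\to0$; it produces two \emph{fixed} rationally independent essential values, $1-\tfrac{1}{\sqrt2}$ and $-\tfrac{1}{\sqrt2}$ (after normalizing the jump at $\beta$ to $-1$). The key device is a \emph{$(\delta,\infty)$-balanced} condition on $\beta$: for a.e.\ $\alpha$ (hence of unbounded type, so $a_{n_k+1}\to\infty$ along a subsequence) and a.e.\ $\beta$, there are infinitely many $k$ at which $\beta$ sits at relative position $\approx\delta$ inside the levels of the tall Rokhlin tower of height $q_{n_k+1}$ over a base of length $\|q_{n_k}\alpha\|$. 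Splitting each level at the preimage of $\beta$ into pieces $J_j^1,J_j^2$ with $|J_j^1|:|J_j^2|\approx\delta:(1-\delta)$, Denjoy--Koksma for $\widetilde f'$ makes $S_{q_{n_k+1}}\widetilde f$ nearly constant on each piece (values $\omega^-,\omega^+$), while a short lemma gives $\bigl|\int_J S_{q_{n_k+1}}\widetilde f\bigr|=O(\|q_{n_k+1}\alpha\|)=o(\|q_{n_k}\alpha\|)$ on every interval of length $\|q_{n_k}\alpha\|$. Together these force $\delta\omega^-+(1-\delta)\omega^+\to0$; combined with $\omega^+-\omega^-=-1$ one gets $\omega^-\to1-\delta$ and $\omega^+\to-\delta$, and the Conze--Fr\k{a}czek criterion (\Cref{thm: FUcriterion}) applied to $\Xi_k^i=\bigcup_jJ_j^i$ delivers the two essential values. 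The choice $\delta=1/\sqrt2$ makes them rationally independent. Your appeal to recurrence of combinatorial words and to Birkhoff sums ``close to multiples of $c$'' does not supply an analogue of the balanced-point lemma or of the small-integral estimate, and without those it is not clear how to close the argument in the $3$-IET framework; passing to the rotation first is what makes both ingredients available.
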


\textcolor{black}{
Similarly to \Cref{thm:main1},
} 
there is also a small family of maps whose behavior differs from the typical case.

\begin{proposition}\label{prop:main3}
\textcolor{black}{
Let $f\in C^{1+AC}([0,1))$ satisfy
\[
\int f=0
\qquad\text{and}\qquad
f(0)=\lim_{x\to 1}f(x)\neq 0.
\]
}
Then there exists a dense family of minimal $3$-IETs $T$ for which $f$ is a coboundary. In particular, the skew product $T_f$ is not ergodic.
\end{proposition}

Note that in all the results stated above, we assumed that
\[
f(0)=\lim_{x\to 1}f(x),
\]
or equivalently,
\[
\int_0^1 f'(x)\,dx=0.
\]
The case
\[
\int_0^1 f'(x)\,dx\neq 0
\]
is also interesting and was treated by Pask \textcolor{black}{for rotations} in \cite[Theorem 2.11]{Pask}; see \Cref{prop:Pask}. \textcolor{black}{For IETs, such functions were treated by Trujillo, Wu and the first-named author in \cite{BerkTrujilloWu}. Although they were able to consider symmetric IETs with an arbitrary number of intervals, they needed an additional \emph{antisymmetry} assumption on $f$ in order to prove ergodicity of the associated skew products.}

It is worth noting that both sine and cosine are coboundaries for irrational rotations. Indeed, let $\alpha$ be irrational and let $R(x):=x+\alpha \bmod{1}$. We first show this for $f(x)=\sin(2\pi x)$. By the difference-of-cosines identity,
\[
\begin{split}
\sin(2\pi x)
&=
\frac{\cos(2\pi (x+\alpha))-\cos(2\pi(x-\alpha))}{-2\sin(2\pi\alpha)} \\
&=
\frac{\cos(2\pi (x+\alpha))+\cos(2\pi x)}{-2\sin(2\pi\alpha)}
-
\frac{\cos(2\pi x)+\cos(2\pi(x-\alpha))}{-2\sin(2\pi\alpha)}.
\end{split}
\]
Since $\alpha$ is irrational, $\sin(2\pi\alpha)\neq 0$. Therefore, if
\[
g(x):=
\frac{\cos(2\pi x)+\cos(2\pi(x-\alpha))}{-2\sin(2\pi\alpha)},
\qquad x\in\R/\Z,
\]
then
\[
\sin(2\pi x)=g(Rx)-g(x).
\]
Thus $\sin(2\pi x)$ is a coboundary for $R$. Since $g$ is differentiable, differentiating both sides gives
\[
2\pi\cos(2\pi x)=g'(Rx)-g'(x).
\]
Hence $\cos(2\pi x)$ is also a coboundary, \textcolor{black}{with transfer function $\frac{1}{2\pi}g'(x)$.} This contrasts with \Cref{thm:main2}.

Our result is directly related to a question of Chaika and Robertson \cite{ChaikaRobertson}. In \cite[Question 1.5]{ChaikaRobertson}, they ask whether, for a typical IET on at least three intervals, the skew product is ergodic \textcolor{black}{for the cocycle $f(x)=\cos(2\pi x)$.} Thus, \Cref{thm:main2} gives an affirmative answer to their question in the case of symmetric $3$-IETs.

\begin{corollary}
For almost every symmetric $3$-IET $T$ on the interval $[0,1)$ and $f(x)=\cos(2\pi x)$, the skew product $T_f$ is ergodic.
\end{corollary}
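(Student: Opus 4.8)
The plan is to read this off directly from \Cref{thm:main2}: once that theorem is in hand, the corollary is a pure instantiation, and the only thing to verify is that the cocycle $f(x)=\cos(2\pi x)$ satisfies its three hypotheses on $[0,1)$.

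First, $f$ is real-analytic, so in particular $f\in C^{1+AC}([0,1))$: its derivative $f'(x)=-2\pi\sin(2\pi x)$ is $C^1$ on $[0,1)$, hence Lipschitz and a fortiori absolutely continuous, and it is piecewise monotone on $[0,1)$, hence of bounded variation (indeed $f'$ has finite total variation $8\pi$). Second, $\int_0^1\cos(2\pi x)\,dx=0$, so $f$ has zero mean. Third, $f(0)=\cos 0=1$ and $\lim_{x\to 1}f(x)=\cos 2\pi=1$, so $f(0)=\lim_{x\to1}f(x)=1\neq 0$; equivalently $\int_0^1 f'(x)\,dx=0$, which places us in the setting of \Cref{thm:main2} rather than in the regime $\int_0^1 f'\neq 0$ treated by \Cref{prop: Pask}. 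With these three points checked, \Cref{thm:main2} applies verbatim: the skew product $T_f$ is ergodic with respect to $\Leb\otimes\Leb_{\R}$ for almost every symmetric $3$-IET $T$, and in particular $f$ is not a coboundary for such $T$. Since the symmetric permutation $(3\,2\,1)$ is the only one yielding a genuine (non-rotation) $3$-IET, this is exactly the statement of the corollary.

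The honest remark is that there is no obstacle internal to this corollary — all the difficulty is concentrated in the proof of \Cref{thm:main2}; here one only has to make the elementary checks above, the single non-automatic bookkeeping point being to record $f(0)=\lim_{x\to1}f(x)\neq0$ so as to land in the right case. What is worth emphasising is the contrast with the introduction, where $\cos(2\pi x)$ was shown to be a \emph{continuous} coboundary over \emph{every} irrational rotation: the corollary says that this phenomenon is destroyed on passing from rotations to typical $3$-IETs, which is precisely the content of the Chaika--Robertson question \cite[Question 1.5]{ChaikaRobertson} that the paper addresses.
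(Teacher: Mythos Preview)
Your proposal is correct and follows exactly the paper's approach: the corollary is stated immediately after \Cref{thm:main2} as a direct instantiation, with no separate proof given in the paper, and your explicit verification of the three hypotheses ($f\in C^{1+AC}$, $\int f=0$, $f(0)=\lim_{x\to1}f(x)=1\neq0$) is precisely what is implicitly required. Your closing remarks on the contrast with rotations and the Chaika--Robertson question are also faithful to the paper's discussion in the introduction.
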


In contrast, \Cref{thm:main1} gives the following result.

\begin{corollary}
For almost every symmetric $3$-IET $T$ on the interval $[0,1)$ and $f(x)=\sin(2\pi x)$, the skew product $T_f$ is not ergodic. However, there exists a dense subset of \textcolor{black}{$3$-IETs} for which $T_f$ is ergodic.
\end{corollary}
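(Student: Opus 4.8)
The plan is to read off both assertions from \Cref{thm:main1}; the entire task reduces to confirming that the particular cocycle $f(x)=\sin(2\pi x)$ lies in the class of functions treated there, so that the theorem applies verbatim. Since the result is a corollary, I expect no genuine obstacle: the only work is a short sequence of hypothesis checks, none of which is delicate.

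First I would verify the regularity requirement. The map $x\mapsto\sin(2\pi x)$ is $C^\infty$ on $[0,1)$, and its derivative $f'(x)=2\pi\cos(2\pi x)$ is absolutely continuous and of bounded variation on $[0,1)$ (it is monotone on $[0,\tfrac12]$ and on $[\tfrac12,1)$, with finite total variation); thus $f\in C^{1+AC}([0,1))$. Next I would check the two normalization conditions: at the endpoints $f(0)=\sin 0=0$ and $\lim_{x\to 1}f(x)=\sin 2\pi=0$, so $f(0)=\lim_{x\to 1}f(x)=0$; and for the mean, $\int_0^1\sin(2\pi x)\,dx=\bigl[-\tfrac{1}{2\pi}\cos(2\pi x)\bigr]_0^1=0$. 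Hence every hypothesis of \Cref{thm:main1} is satisfied by $f=\sin(2\pi\,\cdot)$.

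With the hypotheses in hand, \Cref{thm:main1} yields the corollary immediately. Its main conclusion states that $f$ is a coboundary, and hence that $T_f$ is not ergodic, for almost every symmetric 3-IET; this is the first assertion. Its closing clause produces a dense subset of 3-IETs on which $T_f$ is ergodic; this is the second assertion. (Implicitly, "3-IET" here means a symmetric one, since among the admissible permutations on three letters only the flip permutation gives genuine 3-IETs, the cyclic permutations reducing to rotations.) The only steps are the routine verifications above, so I anticipate no substantive difficulty.
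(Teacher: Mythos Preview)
Your proposal is correct and follows exactly the paper's approach: the corollary is stated in the paper as an immediate consequence of \Cref{thm:main1}, and you have carried out precisely the routine hypothesis checks (regularity, vanishing at the endpoints, zero mean) needed to invoke that theorem for $f(x)=\sin(2\pi x)$. There is nothing to add.
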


In \Cref{sec:iets}, we recall basic definitions \textcolor{black}{and facts concerning IETs}, rotations and skew products. In \Cref{sec:skprod}, we state the essential-value criterion for the ergodicity of skew products (\Cref{thm:FUcriterion}). We then turn to the proofs of the main theorems. In \Cref{sec:mainproof}, we outline the proof of \Cref{thm:main1}, using results of \cite{MarmiMoussaYoccoz} and \cite{Krzysztof2000}. Finally, in \Cref{sec:mainproof2}, we prove \Cref{thm:main2} using the classical Denjoy-Koksma inequality, and we prove \Cref{prop:main3} at the end of the paper.

\textcolor{black}{We also point out that the question about the cosine cocycle posed by Chaika and Robertson in \cite{ChaikaRobertson} concerns IETs with an arbitrary number of intervals and remains open in that generality. 
The argument in the present paper uses a feature special to symmetric $3$-IETs: after inducing on a suitable subinterval, the induced transformation is a rotation.
For typical IETs with more intervals, any induced map is again an IET with more than two exchanged intervals, rather than a rotation. Thus the reduction used here is not available in that setting, except in degenerate combinatorial situations.}

\section*{Acknowledgments}
The authors thank Krzysztof Frączek for suggesting the problem and Jon Chaika for his continuous support. The authors are also grateful to the anonymous referee for their careful reading of the manuscript and for their helpful comments and suggestions. This research was partially supported by the Narodowe Centrum Nauki Grant 2022/45/B/ST1/00179.

\section{Rotations and interval exchange transformations}\label{sec:iets}

Given $\alpha \in \R$, we denote by $R:[0,1)\to[0,1)$ the map
\[
x \mapsto x+\alpha \mod 1.
\] 
This map is called the rotation by $\alpha$ on $[0,1)$. Rotations are special cases of interval exchange transformations, or IETs. For more information about this class of maps, we refer the reader to \cite{Viana2006,claynotes}.

Let $\A$ be an alphabet of $d$ symbols, and let $\lambda \in \R_+^\A$ be a vector of positive entries. Given two bijective functions $\pi_0:\A\to\{1,\dots,d\}$ and $\pi_1:\A\to\{1,\dots,d\}$, let  $\pi$ be the permutation of symbols in $\A$ defined by
\begin{equation*}
    \pi=\begin{pmatrix}
        \pi_0^{-1}(1) & \dots & \pi_0^{-1}(d)\\
        \pi_1^{-1}(1) & \dots & \pi_1^{-1}(d)\\
    \end{pmatrix}.
\end{equation*}
Let $|\lambda| = \sum_{a \in\A}\lambda_a$, $I = [0,|\lambda|)$ and let $I_a \subset I$ be the interval 
\[
\textcolor{black}{%
\left[
\sum_{\{b:\pi_0(b)<\pi_0(a)\}}\lambda_b,
 \sum_{\{b:\pi_0(b)\leq \pi_0(a)\}}\lambda_b
\right)
.}
\]
The vector $\lambda$ and the permutation $\pi$ determine the interval exchange transformation on $d$ intervals $T_{\lambda,\pi}:I \to I$ given by
\[
\textcolor{black}{T_{\lambda,\pi}(x) = x +
\sum_{\{b:\pi_1(b)<\pi_1(a)\}}\lambda_b
-
\sum_{\{b : \pi_0(b) < \pi_0(a)\}}\lambda_b,\quad \text{if }x \in I_a.}
\]
We will write $T$ instead of $T_{\lambda,\pi}$ if $\lambda$ and $\pi$ are given. Also, we say $d$-IET instead of ``IET in $d$ intervals''.

Given a fixed permutation $\pi$, the length parameter space $\R^\mathcal A_+$ determines a family of $d$-IETs. This set is endowed with its natural topology and Lebesgue measure. Hence, generic properties such as density or full measure of IETs refer to properties of the corresponding subset of the parameter space $\R^\mathcal A_+$.

The (possible) discontinuities of $T_{\lambda,\pi}$ are of the form
$$\xi_{\beta} = \sum_{\pi_0(\gamma)<\pi_0(\beta)}\lambda_\gamma$$
and we denote the set of discontinuities by
$D(T_{\lambda,\pi})=\{\xi_\beta: \beta \in \mathcal{A}\}$. 

{An IET $T=T_{\lambda,\pi}$ is said to have the \textbf{Keane property} if for all $m \in \N$ and for all pairs of symbols $\beta, \gamma \in \mathcal{A}$,
$$T^m\xi_\beta \neq \xi_\gamma.$$}

This work focuses on 3-IETs where the permutation is of the form 
\begin{equation}
    \pi=\begin{pmatrix}
    A & B & C \\
    C & B & A
\end{pmatrix},
\label{SymPerm}
\end{equation}
and $2$-IETs  (rotations) with 
\begin{equation*}
    \pi=\begin{pmatrix}
         A & B  \\ B & A
        \end{pmatrix}.
\end{equation*}

\subsection{Induced maps of 3-IETs are rotations} \label{sec:reduction_to_rotation}

Let $T=T_{\lambda,\pi}$ be an IET and $f:[0,|\lambda|) \to \R$ be a real-valued function. We denote \textcolor{black}{by}
\[
T_f : [0,|\lambda|) \times \R \to  [0,|\lambda|) \times \R
\]
the skew product map
\[
(x,t) \mapsto (Tx,t+f(x)).
\]

For every fixed $3$-IET $T$, we consider the first return map to a suitable subinterval $J\subset[0,|\lambda|)$. The subinterval $J$ is either $[0,\lambda_A+\lambda_B)$ or $[0,\lambda_B+\lambda_C)$, depending on which of the two cases below holds. This is the classical Rauzy--Veech induction step; see, for example, \cite{Viana2006} for more details.

Denote by
\[
R_{\widetilde f}:J\times\R\to J\times\R
\]
the first return map of $T_f$ to the cylinder $J\times\R$. Then
\[
R_{\widetilde f}(x,t)=\left(T|_Jx,t+\widetilde f(x)\right),
\]
where $\widetilde f$ is the induced cocycle.

Let $\lambda = (\lambda_A,\lambda_B,\lambda_C)$ be a vector of positive real entries and let $\pi$ be the permutation in \Cref{SymPerm}. 

{\bf Case 1: $\lambda_A > \lambda_C$.} Let $T|_J$ be the first return map of $T$ to $J=[0,\lambda_A+\lambda_B)$. A direct computation shows that $T|_J$ is a rotation by $\alpha = \lambda_B+\lambda_C$ modulo $|J| = \lambda_A+\lambda_B$. The new cocycle function for the first return map is defined by
\begin{equation}\label{eq:case1}\widetilde{f}(x) = 
\left\{
\begin{array}{cll}
   f(x)  &\text{ if }& 0\le x <\lambda_A-\lambda_C \\
   f(x) + f(x+\lambda_B+\lambda_C)  &\text{ if }& \lambda_A - \lambda_C \le x < \lambda_A\\
   f(x) &\text{ if }& \lambda_A \le x <\lambda_A+\lambda_B
\end{array}
\right.
\end{equation}

{\bf Case 2:} $\lambda_A < \lambda_C$. Let $J=[0,\lambda_B+\lambda_C)$ be the interval of induction. The first return map of $T$ to $J$ is the rotation by $\alpha = \lambda_C-\lambda_A$ modulo $|J|=\lambda_B+\lambda_C$. Similarly, the cocycle function for the first return map is the function
\begin{equation}\label{eq:case2}
    \widetilde{f}(x) = \left\{
    \begin{array}{cll}
        f(x) + f(x+\lambda_B+\lambda_C) &\text{ if }&  0 \leq x < \lambda_A\\
        f(x) &\text{ if }& \lambda_A \leq x <\lambda_B+\lambda_C
    \end{array}
    \right.
\end{equation}

\begin{remark}
\label{rmk:ftildeisabscont}
Let $f$ be absolutely continuous and $f(0) = \lim_{x \to |\lambda|} f(x) = 0$.\par 

\noindent In Case 1, when \Cref{eq:case1} applies, \textcolor{black}{if $f$ does not vanish at the relevant discontinuity of $T$, then $\widetilde{f}$ has a nonzero jump}. Nevertheless, it is piecewise absolutely continuous. The intervals of absolute continuity are $[0,\lambda_A-\lambda_C)$ and $[\lambda_A - \lambda_C, \lambda_A+\lambda_B)$. These two intervals are exchanged by the rotation $T|_J$.\par
    
\noindent In Case 2, when \Cref{eq:case2} applies, the induced cocycle $\widetilde{f}$ is absolutely continuous on $J$.
\end{remark}

To show that a cocycle function is a coboundary, we often prove that the induced cocycle function is a coboundary rather than directly working with the original cocycle. This is a standard observation, but we include the proof in the notation used in this paper.

\begin{lemma}\label{rmk:inducedenough}
Let $(X,\mathcal B,\nu,T)$ be an invertible ergodic measure-preserving system, and let $Y\subset X$ be a measurable set with $\nu(Y)>0$. Let $T_Y:Y\to Y$ be the first return map to $Y$, and let $r:Y\to\mathbb N$ be the first return time. For a measurable function $f:X\to\R$, denote by $\widehat{f}$ the induced cocycle
\[
    \widehat{f}(x)=\sum_{j=0}^{r(x)-1} f(T^jx),
    \qquad x\in Y.
\]

If $\widehat f$ is a coboundary for $T_Y$, then $f$ is a coboundary for $T$ on the full-measure set
\[
    \bigcup_{n=0}^{\infty}T^nY.
\]
In particular, since $T$ is ergodic, $f$ is a coboundary for $T$ almost everywhere.
\end{lemma}

\begin{proof}
Suppose that there exists a measurable function $\widehat h:Y\to\R$ such that
\[
    \widehat f(x)=\widehat h(T_Yx)-\widehat h(x).
\]
For $x\in Y$ and $j=0,\dots,r(x)-1$, define
\[
    h(T^jx):=
    \begin{cases}
        \widehat h(x), & j=0,\\
        h(T^{j-1}x)+f(T^{j-1}x), & j\ge 1.
    \end{cases}
\]
Then $f(y)=h(Ty)-h(y)$ for every
\[
    y\in \bigcup_{n=0}^{\infty}T^nY.
\]
Since $T$ is ergodic and $\nu(Y)>0$, this set has full measure. Therefore the cohomological equation holds almost everywhere.
\end{proof}

\subsection{Continued fractions}
\label{sec:Continued_Fraction_defs}
Let $\alpha \in \R \backslash \Q$ be an irrational number, and 
\begin{equation*}
    a_0 + \frac{1}{a_1 + \frac{1}{a_2 + \frac{1}{a_3 + \frac{1}{\ddots}}
    }
    }
\end{equation*}
with
\textcolor{black}{$a_0 \in \Z$}, $a_1,a_2,a_3,\dots \in \N_{>0}$ be its continued fraction expansion.
The coefficients $a_i$ are called the {partial quotients} and the fractions of the form
\begin{equation*}
    \frac{p_n}{q_n} = a_0 + \frac{1}{a_1 + \frac{1}{a_2 + \frac{1}{\ddots \frac{1}{a_{n-1} + \frac{1}{a_n}}}
    }
    }
\end{equation*}
are called the \textcolor{black}{convergents} of $\alpha$. We
denote by $\|x\|$ the distance of $x$ to the nearest integer and $\{x\}$ the fractional part of $x$. 

We refer the \textcolor{black}{reader to \cite{Khinchin}}
and \cite[Chapter 3]{einsiedler2010ergodic} for more information on the properties of continued fraction expansions. 
 
We say that an irrational number $\alpha$ is of \textbf{Roth-type} if for all $\epsilon>0$ the growth rate of the partial quotients is of the form $$\textcolor{black}{a_{n+1} = \bigO(q_n^\epsilon).}$$ 
\textcolor{black}{Equivalently}, for every $\epsilon >0,$ there exists a positive constant $C_{\epsilon}$ such that, for all $n\in \N$,
$$q_n^{1+\epsilon}\|q_n\alpha\|>C_{\epsilon}.$$ The set of all Roth-type irrational numbers \textcolor{black}{has} full measure.

We note that \cite{MarmiMoussaYoccoz} generalized this property to IETs with more than two intervals. We do not need the definition of Roth-type $d$-IETs with $d\geq 3$. However, Roth-type 2-IETs are rotations by an irrational number of Roth-type.

There is a residual set of irrational numbers that are not of Roth-type. Let $k \in \N$  be a natural number and denote by  $S_k^0$ the set of irrational numbers $\alpha$ such that the sequence of denominators $(q_n)_{n\in \N}$  \textcolor{black}{ of the continued fraction satisfies}
$$\liminf_{n \to \infty} q_n^{k+1}\|q_n\alpha\|=0.$$

\subsection{Rokhlin towers for rotations}
\label{sec:Rokhlin}
With the properties of continued fractions defined in \Cref{sec:Continued_Fraction_defs}, we can construct a family of Rokhlin towers for the interval $[0,1)$ and the rotation by $\alpha$. \textcolor{black}{At each step, this construction produces two adjacent towers,} one small of width $\|q_{n+1} \alpha \|$ and height $q_n$, and the other tower is wider and very tall of width $\|q_n \alpha\|$ and height $q_{n+1}$. For each $n\in\N$ we have that:\par
\noindent If $n$ is \textcolor{black}{\textbf{odd}}, the \textcolor{black}{bases} of the Rokhlin tower are the intervals $I_{n} = [-\|q_n\alpha\|,0)$ and $\tilde{I}_{n} = [0,\|q_{n+1}\alpha\|)$. The levels of the smaller tower are intervals of the form $R^j\tilde{I}_{n}$ with $j\in\{0,\dots,q_n-1\}$. 
The levels of the larger tower are intervals of the form $R^kI_n$ with $k \in \{0,\dots,q_{n+1}-1\}$.\par

\noindent If $n$ is \textcolor{black}{\textbf{even}}, the \textcolor{black}{bases} are $\tilde{I}_{n} = [-\|q_{n+1}\alpha\|,0)$ and $I_{n} = [0,\|q_{n}\alpha\|)$. The levels of the tower are intervals of the form $R^j\tilde{I}_{n}$ with $j\in\{0,\dots,q_n-1\}$ and $R^kI_n$ with $k \in \{0,\dots,q_{n+1}-1\}$.

\begin{figure}[t]
    \centering
    \begin{subfigure}{0.45\textwidth}
            \begin{tikzpicture}[scale=1]
    \draw[blue,very thick]
    (0,0)--(0.75,0)
    (0,0.5)--(0.75,0.5)
    (0,2)--(0.75,2)
    (0,2.5)--(0.75,2.5);
    \draw[red,very thick] 
    (-3,0)--(0,0)
    (-3,0.5)--(0,0.5)
    (-3,2)--(0,2)
    (-3,2.5)--(0,2.5)
    (-3,3)--(0,3)
    (-3,3.5)--(0,3.5)
    (-3,4)--(0,4)
    (-3,4.5)--(0,4.5)
    (-3,5)--(0,5);
    \node[blue, very thick] at (0.375,1.25) {{\huge $\vdots$}};
    \node[red, very thick] at (-1.5,1.25) {{\huge $\vdots$}};
    \node[very thick] at (0,0)[below] {0};
    \node[very thick] at (0.75,0) [below right] {$\|q_{n+1}\alpha\|$};
    \node[very thick] at (-3,0) [below] {$-\|q_n\alpha\|$};
    \node[very thick] at (-1.5,0) [above] {$x$};
    \node[very thick] at (-1.5,2) [below] {$R^kx$};
    \node[very thick] at (-1.5,5) [above] {$R^{q_{n+1}-1}x$};
    \node[very thick] at (-0.75,0) [below] {$R^{q_{n+1}}x$};
    
    \draw[fill] (-1.5,0) circle[radius=2pt];
    \draw[fill] (-1.5,2) circle[radius=2pt];
    \draw[fill] (-1.5,5) circle[radius=2pt];
    \draw[fill] (-0.75,0) circle[radius=2pt];
\end{tikzpicture}
        \caption{Case $n$ is \textcolor{black}{odd}}
    \end{subfigure}
    \hspace{0.25cm}
    \begin{subfigure}{0.45\textwidth}
        \begin{tikzpicture}[scale=1]
    \draw[blue,very thick]
    (-0.75,0)--(0,0)
    (-0.75,0.5)--(0,0.5)
    (-0.75,2)--(0,2)
    (-0.75,2.5)--(0,2.5);
    \draw[red,very thick] 
    (0,0)--(3,0)
    (0,0.5)--(3,0.5)
    (0,2)--(3,2)
    (0,2.5)--(3,2.5)
    (0,3)--(3,3)
    (0,3.5)--(3,3.5)
    (0,4)--(3,4)
    (0,4.5)--(3,4.5)
    (0,5)--(3,5);
    \node[blue, very thick] at (-0.375,1.25) {{\huge $\vdots$}};
    \node[red, very thick] at (1.5,1.25) {{\huge $\vdots$}};
    \node[very thick] at (0,0)[below] {0};
    \node[very thick] at (-0.75,0) [below left] {$-\|q_{n+1}\alpha\|$};
    \node[very thick] at (3,0) [below right] {$\|q_n\alpha\|$};
    \node[very thick] at (-0.375,0) [above] {$x$};
    \draw[fill] (-0.375,0) circle[radius=2pt];
    \node[very thick] at (-0.375,2.5) [above] {$R^{q_n-1}x$};
    \draw[fill] (-0.375,2.5) circle[radius=2pt];
    \draw[very thick, fill] (3-0.375,0) circle[radius=2pt];
    \node[very thick] at (3-0.375,0) [above] {$R^{q_n}x$};
\end{tikzpicture}
        \caption{Case $n$ is \textcolor{black}{even}}
    \end{subfigure}
    
    \caption{Rokhlin towers for a rotation by $\alpha$.}
    \label{fig:RokhlinTowers}
\end{figure}

In \Cref{fig:RokhlinTowers}, we represent the cases when $n$ is even or odd. The rotation $R$ sends each level to the one immediately above it, except the top levels. When $x$ is in one of the tops of the towers, $R$ sends $x$ to a point in the base. The \textcolor{black}{first return map} to the base of the Rokhlin towers is a rotation by $(-1)^n\|q_{n}\alpha\|$ $\mod \|q_{n+1}\alpha\| + \|q_n\alpha\|$.\par

\begin{remark}
    \label{rmk:shift of towers}
    Notice that by adding $z \bmod{1}$ to the \textcolor{black}{bases} of the towers, we can build Rokhlin towers around $z$, i.e. the \textcolor{black}{bases} now are $[-\|q_n\alpha\|+z,z)$ and $[z,\|q_{n+1}\alpha\|+z)$ or $[-\|q_{n+1}\alpha\|+z,z)$ and $[z,\|q_n\alpha\|+z)$ for $n$ odd or even respectively. This is convenient if we want to avoid discontinuities of the cocycle function inside the levels of the towers.
\end{remark}

\section{Skew products and ergodicity criterion}\label{sec:skprod}
In this section, we recall the main definitions and a few results that we will use later.

\subsection{Essential values of a cocycle}
\textcolor{black}{Let $T:X \to X$ be a map} and $f:X\to \R$ a function.
We denote 
\begin{equation}\label{eq: BSdef}
    S_nf(x) = \left\{
        \begin{array}{lll}
            \sum_{k=0}^{n-1}f(T^kx) & \text{ if }& n\ge 1 \\
             0 & \text{ if } & n=0\\
             -\sum_{k=1}^{|n|}f(T^{-k}x) & \text{ if } & n\le-1 \text{ and $T$ is invertible}
        \end{array}
    \right.
\end{equation}
Let $\mathcal{B}$ be a $\sigma$-algebra on $X$. Suppose that $T:X\to X$ is an invertible ergodic measure-preserving transformation with respect to a finite measure $\mu$ on $(X,\mathcal{B})$. We call such a transformation an {\bf ergodic measure-preserving automorphism}. Consider a measurable function \textcolor{black}{$f:X\to\R$} and the skew product
\[
    T_f:X\times \R \to X\times \R,
    \qquad
    T_f(x,t)=(Tx,t+f(x)).
\]
We say that $a\in\R$ is an {\bf essential value} of $f$ if, for every $B\in\mathcal{B}$ with $\mu(B)>0$ and every $\epsilon>0$, \textcolor{black}{there exists $n\in\Z$} such that
\begin{equation*}
    \mu\left(
        B\cap T^{-n}B \cap
        \left\{x\in X: |S_n f(x)-a|<\epsilon\right\}
    \right)>0.
\end{equation*}

Denote by $\operatorname{Ess}(T_f)$ the set of essential values of $f$. The following holds.
\begin{proposition}[{\cite[Proposition 8.2.1 and Corollary 8.2.5]{aaronson1997}}]
\label{prop:essential_values_imply_ergodicity}
    \textcolor{black}{Suppose that $(X,\mathcal B,\mu)$ is a Borel probability space and that
    $T:X\to X$ is an ergodic measure-preserving automorphism. Let
    $f:X\to \R$ be a measurable function and let
    \[
        T_f:X\times \R\to X\times \R,
        \qquad
        T_f(x,t)=(Tx,t+f(x))
    \]
    be the associated skew product.} Then:
    \begin{enumerate}
    \item $\operatorname{Ess}(T_f)$ is a closed subgroup of $\R$.
    \item $T_f$ is ergodic with respect to $\mu \otimes \Leb_{\R}$ if and only if $\operatorname{Ess}(T_f)=\R. $
    \end{enumerate}
\end{proposition}

We will use the following version of an {\bf essential value criterion}, as introduced by Conze and Frączek.
\begin{theorem}[{\cite[Lemma 2.7]{conze_cocycles_2011}}]
\label{thm:FUcriterion}
\textcolor{black}{Suppose that $(X,d)$ is a compact metric space, $\mu$ is a Borel probability measure on $X$ and $T:(X,\mu) \to (X,\mu)$ is an ergodic measure-preserving automorphism.}
Let $a$ be a real number. Assume that for every $\epsilon>0$ there exists a sequence 
of subsets
$(\Xi_n)_{n\in\N}$ and an increasing sequence 
$(q_n)_{n\in\N}$ of natural numbers such that 
\begin{enumerate}[label=(\arabic*),ref=(\arabic*)]
\item \label{cond:measure} 
$\displaystyle  \liminf_{n\to\infty}\mu(\Xi_n)>0$,
\item \label{cond:rigidity}
$\displaystyle  \lim_{n\to\infty}\sup_{x\in \Xi_n}d(x,T^{q_n}(x))=0$,
\item \label{cond:almost_invariant}
$\displaystyle  \lim_{n\to\infty}\mu(\Xi_n\triangle T(\Xi_n))=0$,
\item  \label{cond:BS} 
$\sup_{x\in\Xi_n}|S_{q_n}f(x)-a|<\epsilon$.
\end{enumerate}
Then $a\in \operatorname{Ess}(T_f)$. 
\end{theorem}
{The result \cite[Lemma 2.7]{conze_cocycles_2011} states that 
the 
topological support of the limit distribution 
$P:=\lim_{n\to\infty}\frac{1}{\mu(\Xi_n)}(S_{q_n}f(x)|_{\Xi_n})_*\mu|_{\Xi_n}$ 
is contained in $\operatorname{Ess}(T_f)$ -- this limit exists up to a subsequence due to tightness guaranteed by  
\cref{cond:BS}. However, again by \cref{cond:BS}, 
the topological support of $P$ is contained in $[a-\epsilon,a+\epsilon]\subset \R$. \textcolor{black}{By 
letting $\epsilon \to 0 $ }and by the fact that $\operatorname{Ess}(T_f)$ is a closed subset
of $\R$, we get $a\in \operatorname{Ess}(T_f)$.
}
\textcolor{black}{Recall that an interval $[0,1)$ can be identified with a circle via the map $r\mapsto e^{2\pi ir}$ and thus we can apply \Cref{thm:FUcriterion} to the systems considered in this article.}

Note that to prove ergodicity of a skew product on the space $[0,1)\times \R$, it is enough to restrict the computation to the induced map on $J\times \R$, where $J \subset[0,1)$ is of positive measure. It follows from the following more general facts. 

\begin{proposition}[{\cite[Proposition 1.5.2]{aaronson1997}}]
\label{prop:PropOfErgodicity}
\textcolor{black}{Suppose that $T:X \to X$ is a conservative map with respect to the measure $m$ on $X$. Let $A \subset X$ be a measurable set with $m(A)>0$. Define the restricted measure $m_A$ by
\[
m_A(C)=m(A\cap C)
\]
for every measurable set $C\subset X$.}
\begin{enumerate}[label=(\roman*),ref=(\roman*)]
\item \label{item:first_cond_induced_map} If $T$ is ergodic with respect to $m$, then the first return map $T_A$ is ergodic with respect to $m_A$.
\item \label{item:second_cond_induced_map} If $T_A$ is ergodic with respect to $m_A$ and
\textcolor{black}{
\[
m \left(X \setminus \bigcup_{n=1}^\infty T^{-n}A\right)=0,
\]
}
then $T$ is ergodic with respect to $m$.
\end{enumerate}
\end{proposition}

As a consequence of \Cref{prop:PropOfErgodicity} and the fact that every skew product with an ergodic base and a zero-mean cocycle is conservative (see \cite[Theorem 5.5]{Schmidt} and \cite[Corollary 8.1.5]{aaronson1997}), we have the following.

\begin{corollary}\label{cor:ErgodicityofSkewProd}
    Let $T:=T_{\lambda,\pi}:I\to I$ be an ergodic $3$-IET with respect to $\mu:=\Leb_I$. Let $f:I\to\R$ be a measurable function with $\int_I f\,\mathrm{d}\mu=0$.
    \textcolor{black}{Let $J\subset I$ be a measurable set with $\mu(J)>0$.}
    Denote by
    \[
        R_{\widetilde f}:J\times\R\to J\times\R
    \]
    the first return map of $T_f$ to the cylinder $J\times\R$. If $R_{\widetilde f}$ is ergodic with respect to $\Leb_J\otimes\Leb_\R$ and
    \[
        \mu\otimes\Leb_\R
        \left(
            (I\times\R)\setminus
            \bigcup_{n=1}^{\infty} T_f^{-n}(J\times\R)
        \right)=0,
    \]
    then $T_f$ is ergodic with respect to $\mu\otimes\Leb_\R$.
\end{corollary}

\section{Proof of  \texorpdfstring{\Cref{thm:main1}}{}}\label{sec:mainproof}

\subsection{Reducing \texorpdfstring{$T_f$}{} to \texorpdfstring{$R_{\widetilde f}$}{}}

To make the statement of \Cref{thm:main1} more precise, let $T$ be a minimal $3$-IET and let $f\in C^{1+AC}([0,1))$ satisfy
\begin{equation}
    f(0)=\lim_{x\to 1}f(x)=0.
    \label{eq:LimitCondThm1}
\end{equation}
As \textcolor{black}{explained in Cases 1 and 2 (see \Cref{eq:case1,eq:case2}), the first return map of the skew product $T_f$ to $J\times\R$ is of the form} 
$R_{\widetilde f},$
where $R$ is a rotation on $J$ and $\widetilde f$ is a function with at most one discontinuity in the interior of $J$. In Case 1, this discontinuity occurs at $\lambda_A-\lambda_C$, while in Case 2 there are no discontinuities in the interior of $J$. This follows from \Cref{eq:LimitCondThm1}.

We note that every $3$-IET is measurably isomorphic to its inverse. More precisely, the map
\[
    \mathcal I(x)=|I|-x
\]
gives an isomorphism between $T$ and $T^{-1}$. Hence $T_f$ is measurably isomorphic to $(T^{-1})_{-f\circ\mathcal I}$, with isomorphism
\[
    (x,t)\mapsto (\mathcal I(x),-t).
\]
\textcolor{black}{Consequently, to disprove ergodicity of $T_f$, it is enough to consider Case 2, while to prove ergodicity of $T_f$, it is enough to consider Case 1.}

If $f$ has bounded variation, the induced cocycle $\widetilde{f}$ in \Cref{eq:case2} also has bounded variation. Similarly, $\widetilde{f}$ is absolutely continuous if $f$ is absolutely continuous and satisfies \Cref{eq:LimitCondThm1}, see \Cref{rmk:ftildeisabscont}. 
\textcolor{black}{
Recall that under \Cref{eq:case1,eq:LimitCondThm1},
the induced cocycle $\widetilde f$ is only piecewise absolutely continuous,
not necessarily absolutely continuous on the whole interval. However, the
restrictions of $\widetilde f$ to the two intervals exchanged by $R$ are
absolutely continuous.
}

At this point, we consider the skew product $R_{\widetilde{f}}$ where $R$ is a rotation on the interval $[0,1)$ by $\alpha \in \R\backslash\Q$ and
\begin{enumerate}
    \item $\widetilde{f}$ is an absolutely continuous function,
    \item the derivative of $\widetilde{f}$ has bounded variation,
    \item $\int\widetilde{f} = 0$ and $\int\widetilde{f}' = 0$.
\end{enumerate}

Because of the above reduction and \Cref{rmk:inducedenough}, the proof of \Cref{thm:main1} boils down to showing the following fact.

\begin{proposition}
\label{prop:R_f_is_not_ergodic}
Let $R$ be the rotation by an irrational Roth-type number $\alpha$. Let
$\widetilde f:[0,1)\to\R$ be a function such that $\int \widetilde f\,dx=0$.
\textcolor{black}{Assume that the restrictions of $\widetilde f$ to the intervals
$[0,1-\alpha)$ and $[1-\alpha,1)$ are absolutely continuous, that the corresponding restrictions of
$\widetilde f'$ have bounded variation, and that $\int \widetilde f'\,dx=0$.
}
Then $\widetilde f$ is a coboundary for $R$. In particular, the skew product
$R_{\widetilde f}$ is not ergodic.
\end{proposition}

\subsection{The solution of the cohomological equation}
The proof of \Cref{prop:R_f_is_not_ergodic}
follows from:

\begin{theorem}[{\cite[Theorem A]{MarmiMoussaYoccoz}}]
\label{thm:MarmiMoussaYoccoz}
    Let $T$ be an IET with the Keane property and of Roth-type. Let $\Phi \in BV^1_*\left(\bigsqcup_{s\in\mathcal{A}} I_s\right)$. There exists a function $\chi$ constant on each interval $I_s$ and a bounded function $\Psi$ such that 
    \begin{equation}
    \Psi \circ T - \Psi = \Phi - \chi.
\label{eq:MarmiMoussaYoccoz_Equation}
    \end{equation}
\end{theorem}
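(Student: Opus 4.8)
The plan is to prove this by Rauzy--Veech renormalization together with the associated \emph{special Birkhoff sum} operators, following the principle that a function is a coboundary (up to a piecewise-constant correction) precisely when its special Birkhoff sums decay along the renormalization, and that the Roth type hypothesis is exactly what forces this decay. Throughout I write $T=T^{(0)}$ and let $T^{(n)}$ be the interval exchange obtained by inducing $T$ on a nested sequence of subintervals $I^{(n)}\subset I$ produced by the renormalization algorithm, with $|I^{(n)}|\to 0$.

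First I would set up the renormalization bookkeeping. For each $n$ let $Q(n)$ be the return-time matrix of $T$ to $I^{(n)}$, so that a point of $I_s^{(n)}$ returns to $I^{(n)}$ after exactly $(Q(n)\mathbf{1})_s$ iterations and visits $I_{s'}^{(0)}$ exactly $Q(n)_{s s'}$ times in between; these matrices are products of the elementary nonnegative integral renormalization matrices. To each level I attach the special Birkhoff operator $S(n)$, sending a function $\Phi$ on $I$ to the function on $I^{(n)}$ given by $x\mapsto \sum_{j=0}^{r_n(x)-1}\Phi(T^j x)$, where $r_n(x)$ is the first return time of $x$ to $I^{(n)}$. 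Two structural facts drive everything: first, the special Birkhoff operators compose consistently under renormalization, so $S(n+m)$ factors through $S(n)$ followed by the special Birkhoff operator of the already renormalized map $T^{(n)}$; second, on the $d$-dimensional space of functions constant on each $I_s$ the operator $S(n)$ is given precisely by the matrix $Q(n)$. The piecewise-constant corrections $\chi$ appearing in the statement live in this finite-dimensional space.

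Next I would establish the analytic estimates, which form the heart of the matter. Because $T$ acts by translations on each continuity interval, the derivative of a special Birkhoff sum of an absolutely continuous function is again a special Birkhoff sum, $(S(n)\Phi)'=S(n)(\Phi')$ on each continuity interval. Since the levels of the renormalization tower tile $I$, a Denjoy--Koksma-type computation bounds the total variation of $S(n)\Phi$ on $I^{(n)}$ by the total variation of $\Phi'$ on $I$, uniformly in $n$; together with $|I^{(n)}|\to 0$ this controls the mean-zero oscillating part of $S(n)\Phi$. The genuinely persistent obstruction is the piecewise-constant part recording the mean values of $S(n)\Phi$ on the $I_s^{(n)}$, which evolves under the matrices $Q(n)$. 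Here I would invoke the Roth condition: it controls $\|Q(n)\|$ and supplies an Oseledets/Lyapunov splitting of $\R^{\A}$ into an expanding part and a complementary part on which the induced action is summably contracting. Choosing $\chi$ to cancel the projection of the mean-value data of $\Phi$ onto the expanding directions then yields $\sum_n\|S(n)(\Phi-\chi)\|_{BV}<\infty$.

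Finally, with summable special Birkhoff sums in hand I would construct $\Psi$ by telescoping across the Rohlin towers built from the renormalization: on each tower over $I^{(n)}$ the transfer function is reconstructed from the partial sums $S(k)(\Phi-\chi)$, and the summability guarantees convergence in $BV$ to a function $\Psi$ satisfying $\Psi\circ T-\Psi=\Phi-\chi$ almost everywhere. I expect the main obstacle to be exactly the simultaneous control of the $BV$ norms at every renormalization level: the decay of the piecewise-constant part comes from the hyperbolicity of the cocycle supplied by Roth type, but the bounded-variation part must be estimated uniformly across levels and reconciled with the shrinking intervals $I^{(n)}$. The delicate step is thus the interplay between the Oseledets splitting of $Q(n)$ and the $BV^1$ estimates, together with verifying that a correction $\chi$ lying in the expanding subspace is both available and sufficient to make the corrected special Birkhoff sums summable.
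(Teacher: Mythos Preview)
The paper does not actually prove this statement; it is quoted verbatim as \cite[Theorem A]{MarmiMoussaYoccoz} and used as a black box in the proof of \Cref{prop:R_f_is_not_ergodic}. So there is no ``paper's own proof'' to compare against.

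That said, your outline is faithful to the original Marmi--Moussa--Yoccoz argument: renormalize via Rauzy--Veech, study the special Birkhoff sum operators $S(n)$, observe that on piecewise constants $S(n)$ acts by the cocycle matrices $Q(n)$, use Denjoy--Koksma to bound the oscillating $BV$ part uniformly, invoke the Roth hypothesis to obtain subexponential growth of $\|Q(n)\|$ and a splitting separating the expanding directions, kill those directions with the piecewise-constant correction $\chi$, and then telescope across the renormalization towers to build $\Psi$. The one point to be careful about is your description of the correction: in the actual argument $\chi$ is not chosen merely from the ``expanding subspace'' in the Oseledets sense, but rather so that the mean-value vector of $\Phi-\chi$ lands in the stable/central subspace of the cocycle (equivalently, $\chi$ is the projection of the mean data onto the complement of that subspace). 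With that adjustment your sketch matches the strategy of the cited source.
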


For context, $\Phi \in BV^1_*(\bigsqcup I_s)$ if $\Phi|_{I_s}$ is absolutely continuous, $\Phi'|_{I_s}$ has bounded variation and $\int \Phi'=0.$
\begin{proof}[Proof of \Cref{prop:R_f_is_not_ergodic}]

Take $T =  R$, where $R$ is the rotation by the Roth-type irrational number $\alpha$. Take $\Phi = \widetilde{f}$. \textcolor{black}{Note that 
\[
\int_J\widetilde{f}'=\int_I f'=\lim_{x\to 1}f(x)-f(0)=0.
\]
} 
By \Cref{thm:MarmiMoussaYoccoz}, there exist a piecewise constant function
\[
\chi=a_1\mathds{1}_{[0,1-\alpha)}+a_2\mathds{1}_{[1-\alpha,1)}
\]
and \textcolor{black}{a bounded function $\Psi$} such that
\[
\Psi\circ R-\Psi=\widetilde f-\chi.
\]
Since $\int \widetilde{f} = 0$, the previous equation implies that $\int \chi = 0$. Hence, we can assume that \textcolor{black}{$a_1=c\alpha$ and $a_2=-c(1-\alpha)$ for some $c \in \R$.}
    Notice that the function $\chi = c\alpha \mathds{1}_{[0, 1-\alpha)} +c(\alpha - 1)\mathds{1}_{[1-\alpha, 1)}$ is a coboundary, that is, $h\circ R - h = \chi$ for $h(x) := cx$:
    \begin{equation*}
        h(x + \alpha \bmod{1}) - h(x) = \left\{\begin{array}{lc}
            c(x + \alpha - x) & x\in[0,1-\alpha), \\
            c(x-(1-\alpha) - x) & x\in[1-\alpha,1). 
        \end{array}\right.
    \end{equation*}

    It follows that $\widetilde{f}$ is a coboundary:
    \begin{equation*}
        \widetilde{f}= (\Psi + h)\circ R - (\Psi + h)
    \end{equation*}
    and $R_{\widetilde{f}}$ is not ergodic because sets of the form $\{(x,s):s \leq \Psi(x) + h(x)\}$ are invariant.
\end{proof}

\textcolor{black}{
In order to apply \Cref{thm:MarmiMoussaYoccoz}, it is not necessary to assume that $f’$ is absolutely continuous; it is enough to assume that $f’$ has bounded variation and that $\int f’=0$. 
However, this weaker hypothesis is not sufficient for the second part of \Cref{thm:main1}, as the next result shows.
}

\subsection{Ergodic examples and the concluding argument}

A function $g:\T \to \R$ is said to be piecewise absolutely continuous if there are $\beta_0 < \dots < \beta_k \in \T$ (finitely many discontinuities) such that for all $j=0,\dots,k$, the restriction of $g$ to the interval $(\beta_j,\beta_{j+1})$ is absolutely continuous, where we set $\beta_{k+1} := \beta_0$. Denote the jump of $g$ at the discontinuity $\beta_j$ by
\[
a_j = \lim_{x \to \beta_j^+}g(x) - \lim_{y \to \beta_j^-}g(y).
\]
\textcolor{black}{
Denote by $S(g)$ the sum of all its jumps and observe that
\[
S(g) := \sum_{j=0}^ka_j = -\int g'.
\]
The last equality follows from the circle convention and the definition of the jumps.
}

We say that $g \in C_0^{1+\textrm{BV}}$ if $g:\T \to \R$ is absolutely continuous with $\int g =0$ and $g'$ has bounded variation. We say that $g \in C_0^{1+\textrm{PAC}}$ if $g \in  C_0^{1+\textrm{BV}}$ and $g'$ is piecewise absolutely continuous.

Fr{\k a}czek in \cite{Krzysztof2000} proved the  following \namecref{thm:Krzysztof} in greater generality. We only state a simpler version to prove the second part of \Cref{thm:main1}.
\begin{theorem}[{\cite[Theorem 1.1]{Krzysztof2000}}]
\label{thm:Krzysztof}
Let $R$ be a rotation by $\alpha \in S_1^0$ and let $g \in C_0^{1+\textrm{PAC}}$ be such that $S(g')=0$.
Let $\beta_0,\dots,\beta_k\in \T$ \textcolor{black}{
be the discontinuities of $g'$, namely, 
\[
a_j\neq 0
\quad 
\text{for every}
\quad
j=0,\ldots,k.
\]
}
If there exists a subsequence $q_{n_1}<q_{n_2}<q_{n_3}<\dots$ of denominators of the continued fraction of $\alpha$ such that 
\begin{equation}
\label{eq:Krzysztof}
    \lim_{j\to \infty}q_{n_j}^2\|q_{n_j}\alpha\|=0 \quad \text{ and } \quad \lim_{j\to \infty} \{q_{n_j}\beta_i\} = \gamma_i,
\end{equation}
where $\gamma_i \neq \gamma_l$ for $i\neq l$, $i,l =0,\dots,k,$ then $R_g$ is ergodic.
\end{theorem}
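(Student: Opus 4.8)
The plan is to verify the essential value criterion \Cref{thm: FUcriterion}: I will show that every $a\in\R$ lies in $Ess(R_g)$, so that $Ess(R_g)=\R$ and $R_g$ is ergodic. Fix $a\in\R$ and $\epsilon>0$, abbreviate $q_n:=q_{n_j}$, and write $\delta_n:=q_n\alpha-p_n$, so that $|\delta_n|=\|q_n\alpha\|$ and, by hypothesis, $q_n^2|\delta_n|\to 0$. The rigidity sets $\Xi_n$ will be unions of levels of the Rokhlin tower of height $q_{n+1}$ from \Cref{sec:Rokhlin}, built around a shifted base (\Cref{rmk:shift of towers}) so that the $\beta_i$ avoid the interiors of the levels, and the times will be $Q_n=m_nq_n$ for a multiplier $m_n=O(q_n)$ to be chosen. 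For $\Xi_n$ of this form, conditions \eqref{cond:rigidity} and \eqref{cond:almost_invariant} are automatic: $\sup_x|R^{Q_n}x-x|=\|Q_n\alpha\|\le m_n\|q_n\alpha\|=O(q_n\|q_n\alpha\|)\to 0$, and if $\Xi_n$ is a union of $O(q_n)$ blocks of consecutive levels then $\mu(\Xi_n\triangle R\Xi_n)$ is at most twice the number of blocks times a level width, i.e.\ $O(q_n\|q_n\alpha\|)\to 0$. So it remains to arrange \eqref{cond:measure} and \eqref{cond:BS} simultaneously.

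The analytic core is a description of $S_{Q_n}g$ on the tall tower. Since $q_n^2\|q_n\alpha\|\to 0$, the points $x,R^{q_n}x,\dots,R^{(m_n-1)q_n}x$ are the near-translates $x+i\delta_n$ of $x$, all within an arc of length $m_n\|q_n\alpha\|=o(1)$, so a second-order expansion together with the Denjoy--Koksma bound $\|S_{q_n}g'\|_\infty\le\mathrm{Var}(g')$ (using $(S_{q_n}g)'=S_{q_n}g'$ and $\int g'=0$) gives
\[
S_{Q_n}g(x)=\sum_{i=0}^{m_n-1}S_{q_n}g(x+i\delta_n)=m_n\,S_{q_n}g(x)+O\bigl(m_n^2\|q_n\alpha\|\,\mathrm{Var}(g')\bigr)=m_n\,S_{q_n}g(x)+o(1)
\]
uniformly in $x$, provided $m_n=O(q_n)$. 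Next I would analyse $S_{q_n}g$ on the tower: it is slowly varying in the level index, since $S_{q_n}g\circ R-S_{q_n}g=g\circ R^{q_n}-g$ has sup norm $O(\|q_n\alpha\|)$, so it is essentially constant on each level and, as $j$ runs over $\{0,\dots,q_{n+1}-1\}$, it traces a piecewise-monotone ``profile'' with increments $O(\|q_n\alpha\|)$. To identify this profile, split $g=\Phi+\sum_i c_iP_i$, where $g'$ jumps only at $\beta_0,\dots,\beta_k$ with jump sizes $c_i$ (so $\sum_ic_i=S(g')=0$), each $P_i\in C_0^{1+\mathrm{PAC}}$ is an explicit model cocycle whose derivative carries a single unit jump at $\beta_i$, and $\Phi\in C_0^{1+\mathrm{BV}}$ has continuous derivative. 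A three-gap / Rokhlin-tower computation evaluates $S_{q_n}P_i$ explicitly: its profile is piecewise monotone with break-levels governed by $\{q_n\beta_i\}$ and total range comparable to a constant times $\{q_n\beta_i\}\bigl(1-\{q_n\beta_i\}\bigr)$. Because the limits $\gamma_i=\lim_j\{q_{n_j}\beta_i\}$ are pairwise distinct, the break-levels stay separated and the $P_i$-contributions do not cancel, so — after checking that the remainder $S_{q_n}\Phi$ does not destroy this — the profile of $S_{q_n}g$ on the tower is nondegenerate: it sweeps an interval of values of length $\ge c/q_n$ for some $c=c(g)>0$ independent of $n$, reaching values on both sides of $0$.

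To conclude, let $m_n$ be the least positive integer such that $m_nS_{q_n}g$ takes, along the tower, values $<a$ and values $>a$; the nondegeneracy just obtained forces $m_n=O(q_n)$, which simultaneously makes the second-order error $m_n^2\|q_n\alpha\|=O(q_n^2\|q_n\alpha\|)\to 0$ negligible (this is where the square in $q_{n_j}^2\|q_{n_j}\alpha\|\to 0$ is used) and keeps $m_n\|q_n\alpha\|\to 0$ for \eqref{cond:rigidity}. Let $G_n$ be the set of levels $j$ on which $|m_nS_{q_n}g-a|<\epsilon/2$; since the profile is piecewise monotone with increments $O(\|q_n\alpha\|)$ and range $O(m_n/q_n)$, $G_n$ is a union of $O(q_n)$ blocks of consecutive levels, of total $\mu$-measure $\asymp \epsilon\,q_n/m_n\asymp\epsilon$, hence bounded below uniformly in $n$. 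Setting $\Xi_n:=\bigcup_{j\in G_n}(\text{level }j)$ gives \eqref{cond:measure}, and $S_{Q_n}g=m_nS_{q_n}g+o(1)$ gives \eqref{cond:BS} for large $n$. With \eqref{cond:rigidity} and \eqref{cond:almost_invariant} already in hand, \Cref{thm: FUcriterion} yields $a\in Ess(R_g)$; as $a$ was arbitrary and $Ess(R_g)$ is a closed subgroup of $\R$, $Ess(R_g)=\R$ and $R_g$ is ergodic.

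The hard part is the middle step, the description of the profile of $S_{q_n}g$ on the tall tower. Two points require real care: (i) the explicit three-gap evaluation of $S_{q_n}P_i$ and the verification that pairwise distinctness of the $\gamma_i$ genuinely prevents the profiles of the $P_i$ from cancelling and forces a range $\ge c/q_n$ — this is exactly what the hypothesis $\{q_{n_j}\beta_i\}\to\gamma_i$ with distinct limits buys, and it cannot be dropped; and (ii) controlling the ``smooth'' remainder $S_{q_n}\Phi$ uniformly enough that it does not spoil this nondegeneracy, together with keeping the whole orbit segment $x,\dots,R^{(m_n-1)q_n}x$ inside the tower of height $q_{n+1}$, which is where the full strength of $q_{n_j}^2\|q_{n_j}\alpha\|\to 0$ is needed. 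It is precisely this quantitative failure of the Diophantine condition along the chosen subsequence that produces ergodicity, in contrast with the Roth-type case of \Cref{thm:MarmiMoussaYoccoz}, where $g$ is a coboundary.
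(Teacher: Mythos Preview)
The paper does not prove this theorem at all: it is quoted from \cite{Krzysztof2000} and the authors explicitly write ``Fr{\k a}czek in \cite{Krzysztof2000} proved the previous theorem in greater generality. We only state a simpler version for our purposes.'' So there is no proof in the paper to compare your attempt against.

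As for your sketch itself, the overall architecture is the standard one and is indeed what Fr{\k a}czek's argument looks like: reduce to the essential value criterion, work on the tall Rokhlin tower, use $Q_n=m_nq_n$ with $m_n=O(q_n)$, control $S_{Q_n}g$ via $m_nS_{q_n}g$ plus a second-order error of size $O(m_n^2\|q_n\alpha\|)$, and exploit the jump structure of $g'$ at the $\beta_i$ together with the distinctness of the limits $\gamma_i$ to get a nondegenerate profile. One small correction: in this paper $g\in C_0^{1+\mathrm{PAC}}$ is absolutely continuous, so the $\beta_i$ are the discontinuities of $g'$, not of $g$; your decomposition $g=\Phi+\sum_i c_iP_i$ with $P_i'$ carrying a unit jump at $\beta_i$ is consistent with that reading.

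The genuine gap in your outline is the ``profile'' step, and you flag it yourself. Two points are not yet pinned down. First, the claimed lower bound on the range of $S_{q_n}g$ along the tower: you assert it is $\ge c/q_n$, but the heuristic you give (increments $O(\|q_n\alpha\|)$ over $q_{n+1}$ levels) only yields an \emph{upper} bound of order $1$, not a lower bound of order $1/q_n$; the actual size depends on a cancellation analysis of $\sum_j g'(x_j)$ along the tower, and this is exactly where the explicit three-gap computation for the sawtooth pieces and the hypothesis $\gamma_i\neq\gamma_l$ enter. Second, you need $S_{q_n}g$ to take values on both sides of $0$ (or at least near $0$) so that some multiple $m_nS_{q_n}g$ straddles $a$; this does not follow just from $\int S_{q_n}g=0$ together with ``range $\ge c/q_n$'', since the mean being zero is over the whole circle while your profile lives on the tall tower. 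In Fr{\k a}czek's proof these two issues are handled by an explicit computation of the Birkhoff sums of the model pieces, and the non-cancellation coming from the distinct $\gamma_i$ is what makes it go through; your sketch identifies the right ingredients but does not yet carry out that computation.
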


\begin{proof}[Proof of \Cref{thm:main1}]

We first prove the first part of the statement. The full-measure set of
$3$-IETs for which $f$ is a coboundary follows directly from
\Cref{prop:R_f_is_not_ergodic} and \Cref{rmk:inducedenough}. Indeed, for a
full-measure set of parameters $(\lambda_A,\lambda_B,\lambda_C)$, the
rotation obtained by inducing on $[0,\lambda_B+\lambda_C)$, after
normalization, is of Roth-type. In the other case, we apply the same argument
to $T^{-1}$.

We now prove the second part of the statement. Let $T=T_{\lambda,\pi}$ be a
$3$-IET, and let $f:[0,|\lambda|)\to\R$ be a
$C_0^{1+AC}([0,|\lambda|))$-function such that
\[
    f(0)=\lim_{x\to |\lambda|}f(x)=0.
\]
According to \Cref{rmk:ftildeisabscont}, when Case~1, \Cref{eq:case1},
applies, the induced cocycle $\widetilde f:J\to\R$ is piecewise absolutely
continuous and has a possible nonzero jump at $\lambda_A-\lambda_C$.
Denote this jump by
\[
c:=
\lim_{x\to(\lambda_A-\lambda_C)^+}\widetilde f(x)
-
\lim_{x\to(\lambda_A-\lambda_C)^-}\widetilde f(x).
\]
Recall that $R:J\to J$ is the rotation by $\lambda_B+\lambda_C$. Define
\[
h(x):=\frac{c}{|J|}x
\qquad \text{and} \qquad
\widehat f:=\widetilde f+h\circ R-h.
\]
Then $h\circ R-h$ has jump $-c$ at $\lambda_A-\lambda_C$. Hence the jump
of $\widetilde f$ at $\lambda_A-\lambda_C$ is cancelled, and $\widehat f$
is absolutely continuous. Moreover,
\[
    \int_J\widehat f=\int_J\widetilde f=0,
\]
because $h\circ R-h$ has integral zero. Hence $\widehat f\in C_0^{1+PAC}$.

Assume now that
\[
f'(0)=\lim_{x\to |\lambda|}f'(x).
\]
Then
\[
S(f')=S(\widetilde f')=S(\widehat f')=0.
\]
If, in addition, $f'$ does not vanish at the discontinuities of $T$, then
$\widehat f'$ has a discontinuity at
$x=\lambda_A\in(\lambda_A-\lambda_C,\lambda_A+\lambda_B)$. Indeed,
by assumption $\lim_{x\to 1^-}f'(x)\neq 0$, and
\[
\lim_{x\to \lambda_A^+}\widehat f'(x)
=
f'(\lambda_A)
\neq
f'(\lambda_A)+\lim_{x\to 1^-}f'(x)
=
\lim_{x\to \lambda_A^-}\widehat f'(x).
\]

The condition in \Cref{eq:Krzysztof} is a dense condition on the length
parameter $\lambda$. After normalizing so that $|J|=1$, \Cref{thm:Krzysztof}
applied to $\widehat f$ implies that $R_{\widehat f}$ is ergodic. Since
$\widehat f$ and $\widetilde f$ differ by a coboundary, the skew products
$R_{\widehat f}$ and $R_{\widetilde f}$ are measurably isomorphic. Hence
$R_{\widetilde f}$ is ergodic. By \Cref{cor:ErgodicityofSkewProd}, $T_f$ is
ergodic for a dense set of parameters $\lambda$.
\end{proof}

\subsection{Distinct endpoint values}
\textcolor{black}{%
To place our results in a broader context, we provide the result under the assumption that the values of $f$ at the endpoints do not coincide.\\
Recall that if $f$ is piecewise absolutely continuous, then so is the induced cocycle $\widetilde f$. \textcolor{black}{Moreover, the sum of the jumps of $f$ is equal to the sum of the jumps of $\tilde f$.} Pask in \cite[Theorem 2.11]{Pask} proved that for every irrational rotation $R$ and piecewise absolutely continuous function $\widetilde f$ with zero mean and nonzero total sum of jumps, the skew product $R_{\widetilde f}$ is ergodic.
Hence, using the reduction to the induced rotation described in \Cref{sec:reduction_to_rotation} and \Cref{cor:ErgodicityofSkewProd}, we obtain the following.
\begin{proposition}\label{prop:Pask}
    For every ergodic 3-IET $T$ on $[0,1)$, if $f\in \textcolor{black}{C^{1+AC}([0,1))}$ is such that \textcolor{black}{$\int f=0$} and $f(0)\neq \lim_{x\to 1}f(x)$, then the skew product $T_f$ is ergodic.
\end{proposition}
}

\section{Proof of \texorpdfstring{\Cref{thm:main2}}{}}\label{sec:mainproof2}
In this section, we prove  \Cref{thm:main2}. Using the essential value criterion, we will prove that skew products are ergodic (\Cref{thm:FUcriterion}).
The primary tool will be the discontinuity that appears in the interior (not in the orbit of 0) of one of the intervals exchanged by $T|_J$, which, in our case, is a rotation. 

\subsection{Initial reductions.} 

\textcolor{black}{
Assume that
\[
f\in C^{1+AC}([0,1)),\qquad
\int f=0,
\qquad
f(0)=\lim_{x\to 1^-}f(x)\neq 0.
\]
}
\textcolor{black}{To simplify the computations, after multiplying $f$ by a scalar if necessary, we can assume that $\lim_{x\to 1^-}f(x)=1$.}

Without loss of generality, we assume that Case 1 holds, that is, \Cref{eq:case1}. In this case, the induced cocycle $\widetilde f$, viewed as a function on the circle, has a guaranteed discontinuity at $\lambda_A$. \textcolor{black}{We do not claim that $\widetilde f$ necessarily has discontinuities at $0$ or at $\lambda_A-\lambda_C$.}

We now reverse the problem. Namely, we consider almost every $\alpha\in[0,1)$, almost every $\beta\in[0,1)$, and a zero-mean cocycle $f$ over the rotation $R(x)=x+\alpha$, where $f$ is piecewise $C^{1+AC}$. Specifically, $f$ is $C^{1+AC}$ on
\[
[0,1-\alpha),\qquad [1-\alpha,\beta),\qquad [\beta,1),
\]
\textcolor{black}{
and has a nontrivial jump at $\beta$, that is,
}
\[
\lim_{x\to\beta^+}f(x)-\lim_{x\to\beta^-}f(x)\neq 0.
\]
As above, after multiplying by a nonzero scalar, we may assume that this jump is equal to $-1$. We devote the rest of this section to proving that, under these assumptions, the skew product $R_f$ is ergodic.

\begin{figure}[ht]
    \centering
    \includegraphics[width=0.75\linewidth, trim={1.4cm 0cm 0cm 0cm}, clip=true]{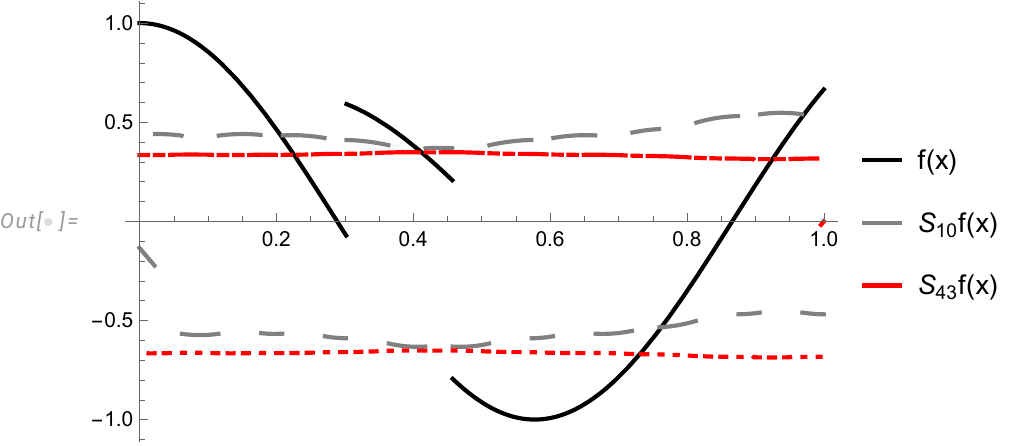}
    \caption{Behavior of the Birkhoff sums $S_k f$ for different values of $k$. The cocycle $f$ has possible discontinuities at $0$, $1-\alpha$ and $\beta$, with a nontrivial jump at $\beta$. The argument used to prove ergodicity in \Cref{thm:main2} \textcolor{black}{consists in estimating} the values of the jumps at specific times $k$ \textcolor{black}{and combining this} with \Cref{thm:FUcriterion}.}
    \label{fig:Cosine_example}
\end{figure}

\subsection{Bounds of the Birkhoff sums.} Let us start by recalling the standard Denjoy-Koksma inequality. Namely, if $F:[0,1)\to\R$ has bounded variation, then
\[
\left\|S_{q_n}F-q_n\int F \right\|_{\infty} \le Var(F),
\]
\textcolor{black}{where $(q_n)_{n\in\N}$ is a sequence of denominators of the convergents of $\alpha$.}
In particular, in our case, we have
\begin{equation}\label{eq: DKfunction}
    \|S_{q_n}f\|_{\infty}\le Var(f) 
\end{equation}
and
\begin{equation}\label{eq: DKderivative}
    \|S_{q_n}f'\|_{\infty}\le Var(f'),
\end{equation}
\textcolor{black}{where in the latter we used the fact that $\int_0^1 f'=\lim_{x\to1^-}f(x)-f(0)=0$.}

From now on, we assume that $\alpha$ is of unbounded type, \textcolor{black}{meaning that if $(a_n)_{n\in\N}$  is the }sequence of partial quotients, then there exists a subsequence $(n_k)_{k\in\N}$ such that 
\begin{equation}\label{eq:unboundedquotients}
    \lim_{k\to\infty} a_{n_k+2}=\infty.
\end{equation}

We begin by showing that with the above assumption, the integrals of $S_{q_n}f$ converge quickly to 0 on properly chosen intervals.
\begin{lemma}\label{lem: smallintegrals}
\textcolor{black}{Assume that $f$ and $f'$ have bounded variation, and \Crefrange{eq: DKfunction}{eq:unboundedquotients} hold.} Then there exists a sequence $(c_k)_{k\in\N}$ of positive numbers such that
\[
\sup\left\{\left|\int_{J}S_{q_{n_k+1}}f\right| : J\subset [0,1) \text{ with }\ |J|=\|q_{n_k}\alpha\|\right\}\le c_k= \bigO(\|q_{n_k+1}\alpha\|).
\] 
\end{lemma}
\begin{proof}
Recall that for every $n\in\N$, the first return map to any interval $L=[c,d)\subset [0,1)$ of length $\|q_{n+1}\alpha \|+\|q_n\alpha\|$ is a 2-IET (a rotation) and the discontinuity point is either $c+\|q_{n+1}\alpha \|$ or $d-\|q_{n+1}\alpha \|$. For further details, see \Cref{sec:Rokhlin}. In both cases, the entire interval splits into two adjacent Rokhlin towers of intervals (seen on the circle), where the one with the shorter base has height $q_n$ and the one with the longer base has height $q_{n+1}$. Without loss of generality, we assume that the former holds for all \textcolor{black}{considered} $n\in\N$, \textcolor{black}{since the other case is symmetric}.

Fix $k\in\N$ and let $J=[a,b)$ as in the assumption of the lemma. Consider an interval $J':=[a-\|q_{n_k+1}\alpha\|,b)$. Then the full circle can be decomposed into two towers, one over the interval $\widehat J=[a-\|q_{n_k+1}\alpha\|,a)$ of height $q_{n_k}$ and the other over the interval $J$ of height $q_{n_k+1}$.
\textcolor{black}{
Since $\int f=0$, we have 
\begin{equation}
\label{eq:integrals_of_sums}
0  = \int f = \int_{J} S_{q_{n_k+1}}f + \int_{\widehat{J}}S_{q_{n_k}}f.
\end{equation}
}
By the Denjoy-Koksma inequality,
\[
\left|\int_{\widehat J} S_{q_{n_k}}f\right|\le \|q_{n_k+1}\alpha\|\cdot Var(f).
\]
Combining this with \Cref{eq:integrals_of_sums}, we obtain
\[
\left|\int_{J} S_{q_{n_k+1}}f\right|\le \|q_{n_k+1}\alpha\|\cdot Var(f).
\]
Thus, it suffices to take $c_k:=\|q_{n_k+1}\alpha\|\cdot Var(f)$.
\end{proof}

\subsection{\texorpdfstring{$(\delta,\infty)$}{}-balanced points}

For every $k\in\N$, consider the Rokhlin towers with \textcolor{black}{bases given by the} intervals 
\begin{equation}
\label{eq:FixedCase}
I_k^1=[0,\|q_{n_k+1}\alpha\|)
\quad\text{and}\quad
I_k^2=[\|q_{n_k+1}\alpha\|,\|q_{n_k}\alpha\|+\|q_{n_k+1}\alpha\|)
\end{equation}
or
\[
I_k^1=[0,\|q_{n_k}\alpha\|)
\quad\text{and}\quad
I_k^2=[\|q_{n_k}\alpha\|,\|q_{n_k}\alpha\|+\|q_{n_k+1}\alpha\|),
\]
depending on the parity of $n_k$; see \Cref{rmk:shift of towers}. Without loss of generality, by passing to a subsequence if necessary, we may assume that only one of the two alternatives above occurs in the Rokhlin tower decomposition. We assume that this is the case specified by the intervals in \eqref{eq:FixedCase}. The argument in the other case is analogous.

Let $\delta\in(0,1)$. We say that $z\in[0,1)$ is \textbf{$(\delta,\infty)$-balanced} if \textcolor{black}{there exist a subsequence $(n_{k_\ell})_{\ell\in\N}$ and a sequence of positive numbers $(\epsilon_\ell)_{\ell\in\N}$ with $\epsilon_\ell\to0$ such that, writing}
\[
I_{k_\ell}^2=[a_{k_\ell},b_{k_\ell})
:=
[\|q_{n_{k_\ell}+1}\alpha\|,
\|q_{n_{k_\ell}}\alpha\|+\|q_{n_{k_\ell}+1}\alpha\|),
\]
we have
\[
z\in
\bigcup_{j=0}^{q_{n_{k_\ell}+1}-1}
R^j\left(
(\delta-\epsilon_\ell)b_{k_\ell},
(\delta+\epsilon_\ell)b_{k_\ell}
\right)
\]
for every $\ell\in\N$.

It turns out that $\Leb$-almost every point of $[0,1)$ is $(\delta,\infty)$-balanced.

\begin{lemma}\label{lem:balanced}
For every $\delta\in(0,1)$, $\Leb$-almost every $z\in[0,1)$ is $(\delta,\infty)$-balanced.
\end{lemma}

\textcolor{black}{\Cref*{lem:balanced} is a direct corollary of the following \namecref{lem:FayadLemanczyk},} whose proof we omit. The proof is identical to the proof of \cite[Lemma~4]{FayadLemanczyk}.

\begin{lemma}\label{lem:FayadLemanczyk}
Fix $\delta\in(0,1)$. For every $k\in\N$, let $\epsilon_k=k^{-1/2}$ and consider the sets
\[
C_k:=
\bigcup_{j=0}^{q_{n_k+1}-1}
R^j\left(
(\delta-\epsilon_k)b_k,
(\delta+\epsilon_k)b_k
\right).
\]
\textcolor{black}{Then
\[
\Leb\left([0,1)\setminus\limsup_{k\to\infty} C_k\right)=0.
\]}
\end{lemma}

\subsection{Main estimates and concluding arguments.} We now go back to the proof of \Cref{thm:main2}. Let $\beta\in [0,1)$ be such that it satisfies the conclusion of \Cref{lem:balanced} with $\delta=\tfrac{1}{\sqrt{2}}$, i.e. $\beta$ is $\left(\tfrac{1}{\sqrt{2}},\infty\right)$-balanced. \textcolor{black}{To simplify notation}, we write that $\beta$ is $\left(\tfrac{1}{\sqrt{2}},\infty\right)$-balanced with the sequences $(n_k)_{k\in\N}$ and $(\epsilon_k)_{k\in\N}$. Moreover, let $(c_k)_{k\in\N}$ be the sequence given by \Cref{lem: smallintegrals}. Then for every $k\in\N$ and every $0\le j<q_{n_k+1}$, we have
\begin{equation}\label{eq:smallintegrals}
\left|
\int_{R^j
[\|q_{n_k+1}\alpha \|
,
\|q_{n_k}\alpha\|
+
\|q_{n_k+1}\alpha \|)
}
S_{q_{n_k+1}}f
\right|
\le 
c_k = \bigO(\|q_{n_k+1}\alpha\|).
\end{equation}
To finish the proof, we will find two sequences of subsets $(\Xi_k^1)_{k\in\N}$ and $(\Xi_k^2)_{k\in\N}$ of $[0,1)$ which satisfy \Cref{thm:FUcriterion} for $a$ equal to $1-\tfrac{1}{\sqrt{2}}$ and $-\tfrac{1}{\sqrt{2}}$, respectively. 

For any $k\in\N$, define $0\le m_k<q_{n_k+1}$ to be such that 
\[
\beta\in R^{m_k}[\|q_{n_k+1}\alpha\|,\|q_{n_k}\alpha\|+\|q_{n_k+1}\alpha\|).
\]
Let
\[
d_j^k=R^j(R^{-{m_k}}\beta) 
\quad \text{for} 
\quad 0\le j\le m_k
\]
and
\[
d_j^k=R^j(R^{-{m_k}}\beta+\|q_{n_k+1}\alpha\|) 
\quad \text{for} 
\quad 
m_k<j<q_{n_k+1}.
\]
Then for every $0 \le j < q_{n_k+1}$
we define
\[
J_j^1:=[R^{j}\|q_{n_k+1}\alpha \|,d_j^k)
\qquad 
\text{and}
\qquad J_j^2:=[d_j^k,R^{j}(\|q_{n_k}\alpha\|+\|q_{n_k+1}\alpha \|)).
\]
Note that $S_{q_{n_k+1}}f$ is continuous on every interval of the form $J_j^1$ and $J_j^2$.
Now we take
\begin{equation}\label{eq: defofXi}
\Xi_k^1:=\bigcup_{j=0}^{q_{n_k+1}-1}J_j^1
\qquad 
\text{and}
\qquad \Xi_k^2:=\bigcup_{j=0}^{q_{n_k+1}-1}J_j^2.
\end{equation}
By the choice of $\delta$, we have
\[
\min\left\{
\liminf_{k\to\infty} \Leb(\Xi_k^1),\ \liminf_{k\to\infty} \Leb(\Xi_k^2)
\right\}>0.
\]
Since $(q_{n_k+1})_{k\in\N}$ is a subsequence of denominators of the convergents of $\alpha$, we have the limits
\[
\lim_{k\to\infty} \sup |R^{q_{n_k+1}}(x)-x|=\lim_{k\to\infty}\|q_{n_k+1}\alpha\|=0.
\]
Moreover, since both $\Xi_k^1$ and $\Xi_k^2$ are unions of at most two towers with widths smaller than $\|q_{n_k}\alpha\|$, we have
\[
\lim_{k\to\infty}\Leb(\Xi_k^1\triangle R\Xi_k^1)
=
\lim_{k\to\infty}\Leb(\Xi_k^2\triangle R\Xi_k^2)=0.
\]
Hence, it remains to check whether condition \ref{cond:BS} of \Cref{thm:FUcriterion} is satisfied with $a=1-\tfrac{1}{\sqrt{2}}$ for $(\Xi_k^1)_{k\in\N}$ and with $a=-\tfrac{1}{\sqrt{2}}$ for $(\Xi_k^2)_{k\in\N}$. 

Fix $\epsilon>0$. Take $k\in\N$ and $0\le j\le q_{n_k+1}-1$. In view of \eqref{eq:smallintegrals}, we have 
\[
\Big|\int_{J_j^1} S_{q_{n_k+1}}f\ +\ \int_{J_j^2} S_{q_{n_k+1}}f\Big| \le c_k.
\]
Let $\omega_j^-:=\lim_{x\to {d_j^k}^-} S_{q_{n_k+1}}f(x)$ and $\omega_j^+:=\lim_{x\to {d_j^k}^+} S_{q_{n_k+1}}f(x)$. Since $\beta$ is chosen to be the only discontinuity of $f$ in the interior of the tower 
\[
\bigcup_{i=0}^{q_{n_k+1}-1}R^{j+i}[\|q_{n_k+1}\alpha\|,\|q_{n_k+1}\alpha\|+\|q_{n_k}\alpha\|),
\]
(the other possible discontinuities are $0$ and $R^{-1}(0)=1-\alpha$) and the jump in $\beta$ is $-1$, we have
\begin{equation}
\omega_j^+-\omega_j^-=-1\ \text{for every }0\le j<q_{n_k+1}.
\end{equation}
Since for every $0\le j<q_{n_k+1}$, the function $S_{q_{n_k+1}}f$ is continuous on both $J_j^1$ and $J_j^2$, \textcolor{black}{by the mean value theorem we get for every} $x\in J_j^1$ that
\begin{equation}\label{eq: MVT1}
|S_{q_{n_k+1}}f(x)-\omega_j^-|\le \|q_{n_k}\alpha\|\cdot\sup_{y\in J_j^1}|S_{q_{n_k+1}}f'(y)|\le \|q_{n_k}\alpha\|\cdot Var(f'),
\end{equation}
and for every $x\in J_j^2$
\begin{equation}\label{eq: MVT2}
|S_{q_{n_k+1}}f(x)-\omega_j^+|\le \|q_{n_k}\alpha\|\cdot\sup_{y\in J_j^2}|S_{q_{n_k+1}}f'(y)|\le \|q_{n_k}\alpha\|\cdot Var(f').
\end{equation}
Thus we have 
\[
|J_j^1|(\omega_j^--\|q_{n_k}\alpha\|\cdot Var(f'))\le \int_{J_j^1} S_{q_{n_k+1}}(f)
\le |J_j^1|(\omega_j^-+\|q_{n_k}\alpha\|\cdot Var(f'))
\]
and 
\[
|J_j^2|(\omega_j^+-\|q_{n_k}\alpha\|\cdot Var(f'))\le \int_{J_j^2} S_{q_{n_k+1}}(f)
\le |J_j^2|(\omega_j^++\|q_{n_k}\alpha\|\cdot Var(f')).
\]
By combining the two above inequalities, we get
\begin{equation}
\label{eq:upperintbound}
|J_j^1|\omega_j^-+|J_j^2|\omega_j^+\le c_k+2\|q_{n_k}\alpha\|^2\cdot Var(f')
\end{equation}
and
\begin{equation}\label{eq:lowerintbound}
|J_j^1|\omega_j^-+|J_j^2|\omega_j^+ \ge -c_k-2\|q_{n_k}\alpha\|^2\cdot Var(f').
\end{equation}

Take $\varepsilon>0$. Since $c_k$ is of order $\bigO(\|q_{n_k+1}\alpha\|)$ and $\lim_{k\to\infty} a_{n_k+1}=\infty$, $c_k$ is of order $\smallo(\|q_{n_k}\alpha\|)$. Moreover, since $\beta$ is $(\tfrac{1}{\sqrt{2}},\infty)$-balanced, by making $k$ larger if necessary, for every $0\le j<q_{n_k+1}$ we have $|J_j^1|\ge \frac{1}{2}\|q_{n_k}\alpha\|$. Making $k$ larger again if necessary, \eqref{eq:upperintbound} and \eqref{eq:lowerintbound} together yield
\[
\left|\, \omega_j^-+\frac{|J_j^2|}{|J_j^1|}\omega_j^+ \right|\le \varepsilon.
\]
Moreover, since $\omega_j^+-\omega_j^-=-1$, we get
\begin{equation}\label{eq: omegainequality}
\left| \left(1+\frac{|J_j^2|}{|J_j^1|}\right)\omega_j^--\frac{|J_j^2|}{|J_j^1|}  \right|\le \varepsilon.
\end{equation}
Since 
\[
\lim_{k\to\infty}\epsilon_k=0=\lim_{k\to\infty}\tfrac{\|q_{{n_k}+1}\alpha\|}{\|q_{{n_k}}\alpha\|},
\]
by making $k$ larger again if necessary, it follows from $\beta$ being $(\frac{1}{\sqrt{2}},\infty)$-balanced that
\[
\max_{0\le j<q_{n_k+1}}\left|\frac{|J_j^2|}{|J_j^1|}-(\sqrt{2}-1)\right|\le \varepsilon.
\]
By combining this inequality with \eqref{eq: omegainequality}, we get
\[
\frac{\sqrt{2}-1-2\varepsilon}{\sqrt{2}+\varepsilon}\le \omega_j^-\le
\frac{\sqrt{2}-1+2\varepsilon}{\sqrt{2}-\varepsilon}.
\]
By decreasing $\varepsilon$ and increasing $k$ again if necessary, we obtain from the above inequality and from \eqref{eq: MVT1} that:
\[
S_{q_{n_k+1}}(f)(x)\in (1-\tfrac{1}{\sqrt{2}}-\varepsilon,1-\tfrac{1}{\sqrt{2}}+\varepsilon)\ \text{for every }x\in\Xi_{k}^1.
\]
Moreover, since $\omega_j^+-\omega_j^-=-1$, we get from \eqref{eq: MVT2} that 
\[
S_{q_{n_k+1}}(f)(x)\in (-\tfrac{1}{\sqrt{2}}-\varepsilon,-\tfrac{1}{\sqrt{2}}+\varepsilon)\ \text{for every }x\in\Xi_{k}^2.
\]
This proves condition \ref{cond:BS} in  \Cref{thm:FUcriterion} for $(\Xi_k^1)_{k\in\N}$ with $a=1-\frac{1}{\sqrt{2}}$ and $(\Xi_k^2)_{k\in\N}$ with $a=-\frac{1}{\sqrt{2}}$. Hence, the assumptions of \Cref{thm:FUcriterion} are satisfied for both sequences. Since $1-\frac{1}{\sqrt{2}}$ and $-\frac{1}{\sqrt{2}}$ generate a dense subgroup of $\R$
and since $\operatorname{Ess}(R_f)$ is a closed subgroup of $\R$, \Cref{prop:essential_values_imply_ergodicity} implies that $R_f$ is ergodic. \Cref{cor:ErgodicityofSkewProd} then gives the ergodicity of the original skew product, completing the proof of \Cref{thm:main2}.

\subsection{Non-ergodic examples and proof of \texorpdfstring{\Cref*{prop:main3}}{}.} 
\textcolor{black}{%
We finish this section by proving \Cref{prop:main3}. Using the reduction given by Cases 1 and 2, namely, \Cref{eq:case1,eq:case2}, we first consider the induced skew product over a rotation. Assume that the rotation $R$ on $[0,1)$,
obtained by Rauzy-Veech induction from the $3$-IET $T$, 
is of Roth-type and that the discontinuity $\beta$ of $\widetilde f$ is at $R^{-m}(0)$ for some $m\ge 1$. Then $\widetilde f$ is piecewise absolutely continuous. Since the orbit of $0$ is dense for every irrational rotation, the set of $3$-IETs we consider is dense in the space of all $3$-IETs. \\
Let $n\ge 0$ be such that $q_n\ge m$. Then, by taking $J:= [0,\|q_n\alpha\|+\|q_{n+1}\alpha\|)$, we have that the induced 2-IET $R|_J$ is of Roth-type. Denote by $\widehat f(x):=S_{r(x)}\widetilde f(x)$ the induced cocycle function where $r:J\to \N$ is the first return time to $J$. 
Moreover, $\widehat f$ is piecewise absolutely continuous, has zero mean, its derivative has bounded variation on each
interval of continuity of $R|_J$, and the integral of its derivative over these
intervals is zero.
}
Therefore, 
\Cref{prop:R_f_is_not_ergodic} applies and shows that $\widehat f$ is a coboundary for $R|_J$. Recall that  \Cref{rmk:inducedenough} shows that $f$ is a coboundary for $T$.

\bibliography{references}

@book{aaronson1997,
  title={An Introduction to Infinite Ergodic Theory},
  author={Aaronson, Jon},
  isbn={9780821804940},
  lccn={96054848},
  series={Mathematical surveys and monographs},
  url={https://books.google.com/books?id=GtTxBwAAQBAJ},
  year={1997},
  publisher={American Mathematical Society}
}

@article{BerkTrujillo,
author = {Berk, Przemysław and Trujillo, Frank},
title = {On the ergodicity of infinite antisymmetric extensions of symmetric {IET}s},
journal = {Journal of the European Mathematical Society},
year = {2025},
note = {10.4171/JEMS/1635}
}

@article{BerkTrujilloWu,
author = {Berk, Przemysław and Trujillo, Frank and Wu, Hao},
title = {Ergodic properties of infinite extension of symmetric interval exchange transformations},
journal = {Journal de l’École polytechnique — Mathématiques},
volume = {12},
pages = {853-880},
year = {2025}
}

@article{ChaikaRobertson,
author = {Chaika, Jon and Robertson, Donald},
title = {Ergodicity of skew products over linearly recurrent {IET}s},
journal = {Journal of the London Mathematical Society},
volume = {100},
number = {1},
pages = {223-248},
keywords = {37A05 (primary), 37A40 (secondary)},
doi = {https://doi.org/10.1112/jlms.12210},
url = {https://londmathsoc.onlinelibrary.wiley.com/doi/abs/10.1112/jlms.12210},
eprint = {https://londmathsoc.onlinelibrary.wiley.com/doi/pdf/10.1112/jlms.12210},
abstract = {Abstract We prove that the skew product over a linearly recurrent interval exchange transformation defined by almost any real-valued, mean-zero linear combination of characteristic functions of intervals is ergodic with respect to Lebesgue measure.},
year = {2019}
}

@article{conze_cocycles_2011,
title = {Cocycles over interval exchange transformations and multivalued Hamiltonian flows},
journal = {Advances in Mathematics},
volume = {226},
number = {5},
pages = {4373-4428},
year = {2011},
issn = {0001-8708},
doi = {https://doi.org/10.1016/j.aim.2010.11.014},
url = {https://www.sciencedirect.com/science/article/pii/S0001870810004342},
author = {Jean-Pierre Conze and Krzysztof Frączek},
keywords = {Interval exchange transformation, Cocycle, Multivalued Hamiltonian flow, Infinite invariant measure, Ergodicity},
abstract = {We consider interval exchange transformations of periodic type and construct different classes of ergodic cocycles of dimension⩾1 over this special class of IETs. Then using Poincaré sections we apply this construction to obtain the recurrence and ergodicity for some smooth flows on non-compact manifolds which are extensions of multivalued Hamiltonian flows on compact surfaces.}
}

@article{MarmiMoussaYoccoz,
    author = {Stefano Marmi and Pierre Moussa and Jean-Christophe Yoccoz},
    title = {The cohomological equation for {Roth}-type interval exchange maps},
    journal = {Journal of the American Mathematical Society},
    year = {2005},
    volume = {18},
    pages = {823--872},
    number = {4}
}

@article{ForniFlows,
    author = {Giovanni Forni},
    title = {Solutions of the Cohomological Equation for Area-Preserving Flows on Compact Surfaces of Higher Genus},
    journal = {Annals of Mathematics},
    year = {1997},
    volume = {146},
    number = {2},
    pages = {295--344}
}

@incollection{claynotes,
  booktitle={Homogeneous Flows, Moduli Spaces, and Arithmetic},
  title={Interval Exchange Maps and Translation Surfaces},
  author={Jean-Christophe Yoccoz},
  isbn={9780821883730},
  lccn={2010021098},
  series={Clay mathematics proceedings},
  volume = {10},
  year={2010},
  pages = {1--69},
  publisher={American Mathematical Society}
}

@article{Viana2006,
author = {Viana, Marcelo},
journal = {Revista Matem\'{a}tica Complutense},
language = {eng},
number = {1},
pages = {7-100},
title = {Ergodic theory of interval exchange maps.},
url = {http://eudml.org/doc/40876},
volume = {19},
year = {2006},
}

@article{Schmidt,
    author = {Schmidt, Klaus},
    title = {Cocycles on ergodic transformation groups},
    journal = {Macmillan Lectures in Math},
    year = {1977},
    volume = {1}
}

@article{Krzysztof2000,
author = {Krzysztof Fr{\k a}czek},
title = {On ergodicity of some cylinder flows},
journal = {Fundamenta Mathematicae},
year = {2000},
volume={163},
pages={117--130},
issue={2}
}

@article{FayadLemanczyk,
    author = {Bassam Fayad and Mariusz Lema{\'n}czyk},
    title = {On the ergodicity of cylindrical transformations given by the logarithm},
    journal = {Moscow Mathematical Journal},
    year = {2006},
    volume = {6},
    number = {4},
    pages = {657--672}
}

@book{Khinchin,
  title={Continued Fractions},
  author={A. Ya. Khinchin},
  isbn={9780486696300},
  year={1997},
  publisher={Dover Publications}
}

@book{einsiedler2010ergodic,
  title={Ergodic Theory: with a view towards Number Theory},
  author={Manfred Einsiedler and Thomas Ward},
  isbn={9780857290212},
  lccn={2010936100},
  series={Graduate Texts in Mathematics},
  url={https://books.google.com/books?id=PiDET2fS7H4C},
  year={2010},
  publisher={Springer London}
}

@article{Pask,
    author = {David A. Pask},
    title = {Skew products over the irrational rotation},
    journal = {Israel Journal of Mathematics},
    year = {1990},
    volume=69,
    number = {1},
    pages={54--74}
}
\bibliographystyle{alpha}

\end{document}